\newcounter{maint}
\numberwithin{equation}{section}
\newcommand{\ydh}{{}^H_H\mathcal{YD}}
\begin{document}

\newtheorem{theorem}{Theorem}[section]

\newtheorem{lemma}[theorem]{Lemma}

\newtheorem{corollary}[theorem]{Corollary}
\newtheorem{proposition}[theorem]{Proposition}

\theoremstyle{remark}
\newtheorem{remark}[theorem]{Remark}

\theoremstyle{definition}
\newtheorem{definition}[theorem]{Definition}

\theoremstyle{definition}
\newtheorem{conjecture}[theorem]{Conjecture}

\newtheorem{example}[theorem]{Example}
\newtheorem{problem}[theorem]{Problem}
\newtheorem{question}[theorem]{Question}


\def\k{\Bbbk}
\def\id{\mathrm{id}}
\def\ad{\mathrm{ad}}
\title[Multinomial expansion and Nichols algebras $\mathfrak{B}(W_{X,r})$]
{Multinomial expansion and Nichols algebras associated to non-degenerate involutive  solutions of the Yang-Baxter equation}

\author[Shi]{Yuxing Shi }
\address{School of Mathematics and Statistics, Jiangxi Normal University,  Nanchang 330022, P. R. China}\email{yxshi@jxnu.edu.cn}

\subjclass[2010]{16T05, 16T25, 17B37}
\thanks{
\textit{Keywords:} Nichols algebra; Multinomial expansion; Symmetric group; Gelfand-Kirillov dimension;
Hopf algebra.
}

\begin{abstract}
In this paper, we investigate the Nichols algebra $\mathfrak{B}(W_{X,r})$
associated to any non-degenerate involutive solution $(X, r)$ of the Yang-Baxter equation.  
Infinite examples of finite dimensional Nichols algebras are obtained, including 
those of dimension $n^m$ with $m$, $n\in\Bbb Z^{\geq 2}$. 
It turns out that the Nichols algebra $\mathfrak{B}(W_{X, r})$
has interesting  relations with  multinomial expansion. This  is a generalization of 
the work in arXiv:2103.06489, which built a connection between 
the Nichols algebras of squared dimension and  Pascal's triangle. 
\end{abstract}
\maketitle

\section{Introduction}
Nichols algebras appeared first in a work of Nichols \cite{MR506406}, for construction of certain pointed Hopf algebras. They also arose independently in 
Woronowicz, Lusztig, and Rosso's works \cite{woronowicz1989differential}\cite{MR1227098}\cite{MR1632802}.
Finite dimensional Nichols algebras are important ingredients for the 
classification of finite dimensional non-semisimple
Hopf algebras with dual Chevalley property \cite{andruskiewitsch2001pointed}. 

There are several equivalent definitions of Nichols algebras, but in few words one may say
that they are graded Hopf algebras in a braided category which are connected and generated
as algebras by the primitive elements which are all homogeneous of degree one.
In fact, given a braided vector space $(V, c)$ over a field $\k$, that is, a $\k$-vector space endowed with a 
solution $c\in\rm{Aut}(V\otimes V)$ of the braid equation
(also called the Yang-Baxter equation)
\[
(c\otimes \rm{id})(\rm{id}\otimes c)(c\otimes \rm{id})=
(\rm{id}\otimes c)(c\otimes \rm{id})(\rm{id}\otimes c),
\] 
we can construct a Nichols algebra $\mathfrak{B}(V,c)$(or  $\mathfrak{B}(V)$ for short).
When the braiding $c$ is  rigid,  then the Nichols algebra $\mathfrak{B}(V,c)$ can be realized in Yetter-Drinfeld categories 
of Hopf algebras \cite{Schauenburg1992} \cite{Takeuchi2000}.
For example, if $(V, c)$ is of diagonal type, then $\mathfrak{B}(V,c)$ 
can be realized in the category ${}_{\k G}^{\k G}\mathcal{YD}$ for some abelian group $G$. 
We call $(V, c)$ of group type if there is a basis $(x_i)_{i\in I}$ of $V$ and elements $g_i(x_j)\in V$
for all $i$, $j\in I$ such that 
$
c(x_i\otimes x_j)=g_i(x_j)\otimes x_i.
$
In other words, $(V, c)$ is of group type if and only if it is realizable over a group algebra $\k G$ as a 
Yetter-Drinfeld module, see \cite{Heckenberger[2020]copyright2020}. 
Given a braided vector space $(V, c)$, the determination of the dimension or the 
Gelfand-Kirillov dimension of $\mathfrak{B}(V)$ is a difficult task. 
Furthermore, once one knows the dimension of
$\mathfrak{B}(V)$, another hard problem is to present it by generators and relations.
There have been significant advances in the classification problem of finite dimensional Nichols algebras of group type.
In case the braiding $c$ is of diagonal type, the classification was completed  by Heckenberger \cite{heckenberger2009classification}
based on the theory of reflections  \cite{Heckenberger[2020]copyright2020}  and  Weyl groupoid \cite{MR2207786}, and the minimal presentation of these Nichols algebras was obtained by Angiono
\cite{Angiono2013} \cite{MR3420518}. In case $V$ is a  non-simple 
semisimple Yetter-Drinfeld module over non-abelian group
algebras, the classification were almost 
finished by  Heckenberger and  Vendramin \cite{Heckenberger2017} \cite{MR3605018}
under some technical assumptions. Only few examples are known for finite dimensional Nichols algebras over 
indecomposable braided vector spaces of group type. For these examples, please refer to 
\cite[Table 9.1]{Heckenberger2015} and the references therein. 
Heckenberger, Lochmann and Vendramin conjectured  
that any finite dimensional elementary Nichols algebra of group type 
is $bg$-equivalent to one of those listed in 
\cite[Table 9.1]{Heckenberger2015}. Besides, it is conjectured that 
non-abelian finite simple groups have no non-trivial finite dimensional Nichols algebra, see 
\cite{Andruskiewitsch2011} \cite{Andruskiewitsch2017b} \cite{Carnovale-2021} and so on.

The study for Nichols algebras of non-group type is rare. 
One possible approach to obtain finite dimensional 
Nichols algebras of non-group type is to investigate Yetter-Drinfeld categories of  Hopf 
algebras which are not categorically Morita-equivalent to group algebras, see for examples  
\cite{Xiong2019} \cite{Andruskiewitsch2020} \cite{Shi2020even}. 
The other way is to study the Nichols algebras directly from braidings \cite{Andruskiewitsch2018}
\cite{Giraldi2021}, since a Nichols algebra is  completely determined  by its braiding. 
Through  calculations of left skew derivations, 
Andruskiewitsch and Giraldi \cite[section 3.7]{Andruskiewitsch2018} found two classes of $4n$ and $n^2$-dimensional Nichols algebras, which are not of group type 
in general. The two classes of Nichols algebras can be 
realized in the  Yetter-Drinfeld categories of  
the Suzuki Hopf algebras $A_{Nn}^{\mu \lambda}$ \cite{Shi2020even} \cite{Shi2020odd}. 
In  \cite{Yuxing2020}, the author found an interesting connection between the Nichols algebras of squared dimension and the Pascal's triangle. 
In this paper, we generalize this kind of  interesting connection  to  Nichols algebras with braidings arising from  non-degenerate involutive  
solutions of the Yang-Baxter  equation and the multinomial expansion $(x_1+\cdots +x_m)^n$.

The paper is organized as follows. In the section 2, we introduce the Nichols algebras and  set-theoretical solutions of the Yang-Baxter equation.
In the section 3, we define an action of  $\Bbb{S}_n$ on $X^n$
induced by a non-degenerate involutive solution $(X, r)$ of the Yang-Baxter equation, and 
 reveal a connection  between the decomposition  of $X^n$ into orbits and 
 multinomial expansion. 
 There is a braided vector space $W_{X,r}$ induced by the action of 
 $\Bbb{S}_n$ on $X^n$. 
 We calculate the dimensions and the Gelfand–Kirillov dimensions of the Nichols algebra
 $\mathfrak{B}(W_{X,r})$, according to their connections with multinomial expansion.   
In the section $4$, we propose several questions and a conjecture for  future research. 

\section{Preliminaries}
\subsection{Nichols algebra} 
Let $\k$ be  an algebraically  closed field of characteristic $0$, and $\k^\times$ be $\k-\{0\}$.
Here we give a brief introduction to the Nichols algebra. For more details, 
please refer to Heckenberger and Schneider's monograph \cite{Heckenberger[2020]copyright2020}. 
\begin{definition}\cite[Definition 2.1]{andruskiewitsch2001pointed}
\label{defNicholsalgebra}
Let $H$ be a Hopf algebra and $V \in \ydh$. A braided $\mathbb{N}$-graded 
Hopf algebra $R = \bigoplus_{n\geq 0} R(n) \in \ydh$  is called 
the \textit{Nichols algebra} of $V$ if 
\begin{enumerate}
 \item[(i)] $\k \simeq R(0)$, $V\simeq R(1) \in \ydh$.
 \item[(ii)] $R(1) = \mathcal{P}(R)
=\{r\in R~|~\Delta_{R}(r)=r\otimes 1 + 1\otimes r\}$.
 \item[(iii)] $R$ is generated as an algebra by $R(1)$.
\end{enumerate}
In this case, $R$ is denoted by $\mathfrak{B}(V) = \bigoplus_{n\geq 0} \mathfrak{B}^{n}(V) $.    
\end{definition}
\begin{remark}
Let $(V, c)$ be a braided vector space, then 
the Nichols algebra 
$\mathfrak{B}(V)$ is completely determined by the braiding $c$.
More precisely, as proved in  \cite{MR1396857} and
noted in \cite{andruskiewitsch2001pointed},
$$\mathfrak{B}(V)=\k\oplus V\oplus\bigoplus\limits_{n=2}^\infty V^{\otimes n} /
\ker\mathfrak{S}_n=T(V)/\ker\mathfrak{S},$$
where $\mathfrak{S}_{n,1}\in \mathrm{End}_\k \left(V^{\otimes (n+1)}\right)$, 
$\mathfrak{S}_{n}\in \mathrm{End}_\k \left(V^{\otimes n}\right)$,
\begin{align*}
c_i&\coloneqq \mathrm{id}^{\otimes (i-1)}\otimes c
\otimes \mathrm{id}^{\otimes (n-i-1)}\in \mathrm{End}_\k \left(V^{\otimes n}\right), \\
\mathfrak{S}_{n,1} &\coloneqq\mathrm{id}+c_n+c_{n-1}c_n+\cdots
+c_1\cdots c_{n-1}c_n=\mathrm{id}+\mathfrak{S}_{n-1,1}c_n,\\
\mathfrak{S}_1&\coloneqq\mathrm{id}, \quad \mathfrak{S}_2\coloneqq\mathrm{id}+c, \quad
\mathfrak{S}_n\coloneqq \mathfrak{S}_{n-1,1}(\mathfrak{S}_{n-1}\otimes \mathrm{id}).
\end{align*}
\end{remark}

\begin{lemma}\label{TensorNicholsAlg}
(\cite[Theorem 2.2]{MR1779599}, \cite[Remark 1.4]{andruskiewitsch2021finite})
Let $M_1, M_2\in\ydh$ be both finite dimensional and assume 
$c_{M_1,M_2}c_{M_2,M_1}=\mathrm{id}_{M_2\otimes M_1}$.
Then $\mathfrak{B}(M_1\oplus M_2)\cong  \mathfrak{B}(M_1)
\otimes \mathfrak{B}(M_2)$ as graded vector spaces and 
$\mathrm{GKdim}\,\mathfrak{B}(M_1\oplus M_2)=
\mathrm{GKdim}\,\mathfrak{B}(M_1)+
\mathrm{GKdim}\,\mathfrak{B}(M_2)$, where 
$\mathrm{GKdim}$ is an abbreviation of 
the Gelfand–Kirillov dimension.
\end{lemma}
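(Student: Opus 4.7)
The plan is to construct a braided graded Hopf algebra structure on $\mathfrak{B}(M_1) \otimes \mathfrak{B}(M_2)$ that satisfies the defining properties of the Nichols algebra of $M_1 \oplus M_2$. The key leverage comes from the hypothesis $c_{M_1,M_2}c_{M_2,M_1}=\mathrm{id}$, which makes $M_1$ and $M_2$ ``braid-commute'' so that the braided tensor product behaves as nicely as an ordinary tensor product of algebras.

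First, I would form the braided tensor product algebra $R := \mathfrak{B}(M_1) \underline{\otimes} \mathfrak{B}(M_2)$ in $\ydh$, whose multiplication is twisted by the induced braiding $c_{\mathfrak{B}(M_2),\mathfrak{B}(M_1)}$. Because the mutual braiding between the generating subspaces $M_1$ and $M_2$ is an involution (in the sense $c_{M_1,M_2}c_{M_2,M_1}=\mathrm{id}$), this involution lifts to all tensor powers and guarantees that $R$ inherits a well-defined associative algebra and braided bialgebra structure, with the coproduct the braided tensor coproduct of $\Delta_{\mathfrak{B}(M_1)}$ and $\Delta_{\mathfrak{B}(M_2)}$. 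Grade $R$ by total degree $R(n) = \bigoplus_{i+j=n} \mathfrak{B}^i(M_1) \otimes \mathfrak{B}^j(M_2)$.

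Next I would verify the three conditions of Definition \ref{defNicholsalgebra} for $R$ with respect to the Yetter-Drinfeld module $M_1 \oplus M_2$. Conditions $R(0) = \k$, $R(1) \simeq M_1 \oplus M_2$, and generation by $R(1)$ follow immediately from the corresponding properties of each $\mathfrak{B}(M_i)$. The nontrivial condition is $\mathcal{P}(R) = R(1)$. To establish this, I would compute $\Delta_R$ on an element $x \otimes y$ with $x \in \mathfrak{B}^i(M_1)$, $y \in \mathfrak{B}^j(M_2)$ using the braided tensor coproduct and show, via the involutivity hypothesis and the fact that the primitives of each $\mathfrak{B}(M_i)$ lie in degree one, that any primitive of $R$ must have components only in bidegree $(1,0)$ and $(0,1)$. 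This is the main obstacle: the argument requires carefully unpacking the coproduct in the braided tensor product and invoking the triviality of $c_{M_1,M_2}c_{M_2,M_1}$ to prevent cross-terms from cancelling in a way that creates new primitives. Once this is done, the uniqueness of Nichols algebras up to isomorphism yields $\mathfrak{B}(M_1 \oplus M_2) \cong R$ as graded braided Hopf algebras, and hence as graded vector spaces $\mathfrak{B}(M_1 \oplus M_2) \cong \mathfrak{B}(M_1) \otimes \mathfrak{B}(M_2)$.

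Finally, the Gelfand-Kirillov dimension statement follows from the graded vector space isomorphism. Taking Hilbert series gives $H_{\mathfrak{B}(M_1 \oplus M_2)}(t) = H_{\mathfrak{B}(M_1)}(t) \cdot H_{\mathfrak{B}(M_2)}(t)$, and for connected $\mathbb{N}$-graded algebras generated in degree one, GK-dimension is governed by the growth rate of the partial sums of the Hilbert series. Standard facts about GK-dimension under tensor product of finitely generated graded algebras then yield $\mathrm{GKdim}\,\mathfrak{B}(M_1 \oplus M_2) = \mathrm{GKdim}\,\mathfrak{B}(M_1) + \mathrm{GKdim}\,\mathfrak{B}(M_2)$.
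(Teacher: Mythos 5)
This lemma is not proved in the paper at all --- it is quoted verbatim from the cited sources (Gra\~na's Theorem 2.2 and Andruskiewitsch et al.), and your outline is precisely the standard argument given there: form the braided tensor product $\mathfrak{B}(M_1)\underline{\otimes}\mathfrak{B}(M_2)$, use $c_{M_1,M_2}c_{M_2,M_1}=\mathrm{id}$ to make it a braided bialgebra, check that its primitives sit in degree one, and read off the Hilbert series for the GK-dimension. Your approach is correct; the only slight imprecision is that the hypothesis is needed for the compatibility of product and coproduct (the algebra structure of the braided tensor product exists unconditionally), but this does not affect the argument.
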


\subsection{Set-theoretical solutions of the Yang-Baxter equation}
A set-theoretical solution of the Yang-Baxter equation is a  pair $(X, r)$, where 
$X$   is a non-empty set and $r: X\times X\rightarrow X\times X$ is a bijective map
such that
\[
(r\times \rm{id})(\rm{id}\times r)(r\times \rm{id})
=(\rm{id}\times r)(r\times \rm{id})(\rm{id}\times r)
\]
holds. Here $r\times \rm{id}$ and $\rm{id}\times r$
are maps $X^3\rightarrow X^3$, $X^3=X\times X\times X$. 
By convention, we write 
\[
r(i, j)=(\sigma_i(j), \tau_j(i)),\quad  \forall i, j\in X.
\]
Then  $(X, r)$ is a set-theoretical solution of the Yang-Baxter equation if and only if the following identities  hold:
\begin{align}
\sigma_{\sigma_i(j)}\sigma_{\tau_j(i)}&=\sigma_i\sigma_j,\quad
\tau_{\tau_k(j)}\tau_{\sigma_j(k)}=\tau_k\tau_j,
\label{FormulaeYBE1}
\\
\tau_{\sigma_{\tau_j(i)}(k)}\sigma_i(j)&=\sigma_{\tau_{\sigma_j(k)}(i)}\tau_k(j), 
\quad \forall i, j, k\in X, 
\end{align}
see  Remark \ref{CoeffYBE}. 

A solution $(X, r)$ is non-degenerate if all the maps $\sigma_i: X\rightarrow X$
and $\tau_i: X\rightarrow X$ are bijective for all $i\in X$, and involutive if 
$r^2=\id_{X\times X}$.
Note that for non-degenerate involutive solutions, 
\[
\tau_j(i)=\sigma^{-1}_{\sigma_i(j)}(i),\quad 
\sigma_i(j)=\tau_{\tau_j(i)}^{-1}(j),\quad \forall i, j\in X.
\]

A solution $(X, r)$ is decomposable if $X$ is the disjoint union of $Y$ and $Z$ such that
$r(Y, Y)\subseteq Y\times Y$ and $r(Z, Z)\subseteq Z\times Z$. 
A solution $(X, r)$ is indecomposable if it is not decomposable.

If $r(i, j)=(f^{-1}(j),f(i))$ for a bijective map $f: X\rightarrow X$, then $(X, r)$
is a set-theoretical solution of the  Yang-Baxter equation. This solution $(X, r)$ is called a permutation solution, see \cite{Drinfeld1992MR1183474}
\cite{Etingof1999MR1722951}.

\begin{definition}
Let $(X, r)$ be a non-degenerate involutive solution. 
The diagonal of  the solution $(X, r)$ is defined as 
the permutation $D: X\rightarrow X$, $i\mapsto \tau_i^{-1}(i)$.
\end{definition}
\begin{remark}
$D$ is invertible with inverse 
$i\mapsto \sigma_i^{-1}(i)$ and 
\[
\tau_i^{-1}\circ D=D\circ \sigma_i
\] 
for all $i\in X$, see \cite[Proposition 2.2]{Etingof1999MR1722951}.
\end{remark}

\begin{lemma}\cite[Lemma 3.7]{10.1093/imrn/rnab232}
Let $(X, r)$ be a non-degenerate involutive solution and $i, j\in X$. Then $D(i)=j$ if and only if 
$r(j, i)=(j, i)$.
\end{lemma}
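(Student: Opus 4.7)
The plan is to unwind the definitions and reduce both conditions to statements about $\sigma$ and $\tau$, then use the symmetry relations for non-degenerate involutive solutions recorded just before the lemma.

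First I would translate both sides into the language of the maps $\sigma$ and $\tau$. By definition of $D$, the equality $D(i)=j$ is equivalent to $\tau_i^{-1}(i)=j$, i.e.\ to $\tau_i(j)=i$. On the other hand, writing $r(j,i)=(\sigma_j(i),\tau_i(j))$, the fixed-point condition $r(j,i)=(j,i)$ is equivalent to the two equalities
\[
\sigma_j(i)=j \quad\text{and}\quad \tau_i(j)=i.
\]
So the $(\Leftarrow)$ direction is immediate: if $r(j,i)=(j,i)$ then in particular $\tau_i(j)=i$, whence $D(i)=j$.

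For the $(\Rightarrow)$ direction, assume $D(i)=j$, i.e.\ $\tau_i(j)=i$. It remains to deduce $\sigma_j(i)=j$. Here I would invoke the non-degenerate involutive identity
\[
\sigma_a(b)=\tau^{-1}_{\tau_b(a)}(b),\qquad a,b\in X,
\]
stated in the excerpt. Specializing to $a=j$, $b=i$ yields
\[
\sigma_j(i)=\tau^{-1}_{\tau_i(j)}(i)=\tau_i^{-1}(i)=D(i)=j,
\]
where the second equality uses the hypothesis $\tau_i(j)=i$. This completes the proof.

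There is no real obstacle: the statement is essentially a bookkeeping consequence of the involutivity identities. The only thing to be careful about is to apply the correct one of the two symmetric formulas (the one for $\sigma$, not for $\tau$) and with the indices in the right order, since the identity is not symmetric in $(i,j)$.
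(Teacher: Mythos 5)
Your proof is correct. The paper itself gives no proof of this lemma (it is quoted from the cited reference), so there is nothing internal to compare against; your argument is a clean, self-contained derivation using exactly the involutivity identity $\sigma_i(j)=\tau_{\tau_j(i)}^{-1}(j)$ that the paper records in the preliminaries, and both directions check out.
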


\section{Multinomial expansion and  Nichols algebras $\mathfrak{B}(W_{X, r})$}
\subsection{Multinomial expansion and the action of $\Bbb{S}_n$ on $X^n$}

\begin{definition}\label{BraidedVectorSpace}
Let $(X, r)$ be a non-degenerate   solution of the Yang-Baxter equation, $|X|=m\in\Bbb Z^{\geq 2}$.
Then $W_{X, r}=\bigoplus_{i\in X}\k w_i$ is a braided vector 
space with the braiding given by 
\begin{align}\label{YBEquation}
c(w_{i}\otimes w_j)
&=R_{i,j} w_{\sigma_i(j)}\otimes w_{\tau_j(i)}, \quad R_{i,j}\in\k^\times, \\ 
R_{i, j}R_{\tau_j(i), k}R_{\sigma_i(j),\sigma_{\tau_j(i)}(k)}
&=R_{j,k}R_{i,\sigma_j(k)}R_{\tau_{\sigma_j(k)}(i),\tau_k(j)}, 
\quad \forall i, j, k\in X. \label{YBEquation}
\end{align}
\end{definition}
\begin{remark}\label{CoeffYBE}
The formula \eqref{YBEquation} is obtained directly from the braid equation:
\begin{align*}
&\quad (c\otimes \rm{id})(\rm{id}\otimes c)(c\otimes \rm{id})(i\otimes j\otimes k)\\
&=R_{i, j}(c\otimes \rm{id})(\rm{id}\otimes c) (\sigma_i(j)\otimes \tau_j(i)\otimes k)\\
&=R_{i, j}R_{\tau_j(i), k}(c\otimes \rm{id})(\sigma_i(j)\otimes \sigma_{\tau_j(i)}(k)
\otimes \tau_k\tau_j(i))\\
&=R_{i, j}R_{\tau_j(i), k}R_{\sigma_i(j),\sigma_{\tau_j(i)}(k)}
(\sigma_{\sigma_i(j)}\sigma_{\tau_j(i)}(k)
\otimes \tau_{\sigma_{\tau_j(i)}(k)}\sigma_i(j)
\otimes \tau_k\tau_j(i)),\\
&\quad (\rm{id}\otimes c)(c\otimes \rm{id})(\rm{id}\otimes c)(i\otimes j\otimes k)\\
&=R_{j,k}(\rm{id}\otimes c)(c\otimes \rm{id})(i\otimes \sigma_j(k)\otimes \tau_k(j))\\
&=R_{j,k}R_{i,\sigma_j(k)}(\rm{id}\otimes c)
(\sigma_i\sigma_j(k)\otimes \tau_{\sigma_j(k)}(i)\otimes \tau_k(j))\\
&=R_{j,k}R_{i,\sigma_j(k)}R_{\tau_{\sigma_j(k)}(i),\tau_k(j)}
(\sigma_i\sigma_j(k)
\otimes \sigma_{\tau_{\sigma_j(k)}(i)}\tau_k(j)
\otimes \tau_{\tau_k(j)}\tau_{\sigma_j(k)}(i)).
\end{align*}

The braiding of $W_{X,r}$ is rigid according to 
\cite[Lemma 3.1.3]{Schauenburg1992}. 
\end{remark}

In the following of this section, we always assume that  $(X, r)$ is a non-degenerate  involutive  solution of the Yang-Baxter equation, $|X|=m\in\Bbb Z^{\geq 2}$. 
Let $\mathbb{S}_n$ be the symmetric group on $n$ letters. 
The  symmetric group $\Bbb{S}_n$ is generated by transpositions $s_1$, $\cdots$, $s_{n-1}$.
Denote $X^n=\{i_1i_2\cdots i_n\mid i_j\in X, 1\leq j\leq n\}$. 
There is an action of $\Bbb{S}_n$ on $X^n$ induced by the solution $(X, r)$ such that 
\[
 s_k\cdot \left(i_1\cdots i_{k-1}pqi_{k+2}\cdots i_n\right)
=i_1\cdots i_{k-1}p^\prime q^\prime i_{k+2}\cdots i_n,\quad 
r(p,q)=(p^\prime, q^\prime).
\]
Let $\mathcal{O}(x)=\Bbb{S}_n\cdot x$ be the orbit of $x\in X^n$. 
Denote $\mathfrak{G}(x,y)
=\{\sigma\in\Bbb{S}_n\mid \sigma \cdot x=y\}$ for $x$, $y\in X^n$.


As for $x=i_1i_2\cdots i_n\in X^n$, we denote $w_x=w_{i_1}w_{i_2}\cdots w_{i_n}$
for abbreviation of $w_{i_1}\otimes w_{i_2}\otimes \cdots \otimes w_{i_n}$.
Define 
$\mathcal{T}: \Bbb{S}_n\rightarrow \mathrm{End}_{\k}(W_{X,r}^{\otimes n})$ such that
$\mathcal{T}_{s_i}=c_i$
and if $\theta=s_{j_1}s_{j_2}\cdots s_{j_t}\in \Bbb{S}_n$ is a reduced expression, then 
$\mathcal{T}_\theta=c_{j_1}c_{j_2}\cdots c_{j_t}$. We have
$$\mathfrak{S}_n(w_x)=\sum_{\theta\in\Bbb{S}_n}\mathcal{T}_{\theta}(w_x),\quad \forall x\in X^n.$$

For $k_1+\cdots+k_r=n$, the set of $(k_1,\cdots, k_r)$-shuffles, i.e. the set of permutations $w$
such that $w(1)<w(2)<\cdots<w(k_1)$, 
$w(k_1+1)<w(k_1+2)<\cdots<w(k_1+k_2)$, 
$\cdots$, $w(k_1+\cdots+k_{r-1}+1)<\cdots<w(n)$, 
is denoted by $\mathrm{shuffle}(k_1,\cdots,k_r)$.
Let  $\Bbb{G}_n$ be the set of $n$-th primitive roots of unity. 
Denote by $(n)_q=1+q+\cdots +q^{n-1}$,
 $(n)_q^!=\prod_{k=1}^n(k)_q$.

Denote $\mathcal{P}(n,m)$ the set of integer partitions  
$\lambda=(\lambda_1, \lambda_2,\cdots, \lambda_m)$  such that 
$\lambda_1\geq \lambda_2\geq \cdots \geq \lambda_m\geq 0$, $\lambda_1+\lambda_2+\cdots 
+\lambda_m=n$. In general,  we omit the zero parts of $\lambda$. 
The number of permutations of $\lambda=(\lambda_1, \lambda_2,\cdots, \lambda_m)\in \mathcal{P}(n,m)$
is denoted as $\mathrm{Perm(\lambda)}$. 
A partition $\lambda\in \mathcal{P}(n,m)$ can be described as 
$n^{k_n}, (n-1)^{k_{n-1}},\cdots, 1^{k_1}, 0^{k_0}$, where $k_i$ is the number of parts of $\lambda$
equal to $i$. According to the permutation formula with repetition, we have 
$$\mathrm{Perm(\lambda)}=\frac{m!}{k_n!k_{n-1}!\cdots k_1!k_0!}.$$
For example,  $\lambda=(3,2,2,2,0,0)\in \mathcal{P}(9,6)$, 
 $\mathrm{Perm(\lambda)}=\frac{6!}{3!2!}=60$. 

\begin{lemma}\label{MultinomialPartition}
Let $m$, $n\in \Bbb Z^+$, then 
$m^n
=\sum\limits_{\lambda \in \mathcal{P}(n,m)}
\frac{n!}{\lambda_1!\lambda_2!\cdots \lambda_m!}\cdot \mathrm{Perm(\lambda)}$.
\end{lemma}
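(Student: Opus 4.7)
The plan is to recognise the identity as the multinomial theorem specialised at $x_1=\cdots=x_m=1$, and then to regroup the resulting sum over weak compositions by their underlying partitions.

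First I would write
\[
m^n = (\underbrace{1+1+\cdots+1}_{m})^n
\]
and apply the multinomial theorem to the right-hand side. This gives
\[
m^n = \sum_{\substack{(k_1,\ldots,k_m)\in(\mathbb{Z}^{\geq 0})^m\\ k_1+\cdots+k_m=n}}
\frac{n!}{k_1!\,k_2!\cdots k_m!},
\]
so that the sum runs over all weak compositions of $n$ into $m$ non-negative parts.

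Next I would observe that two tuples $(k_1,\ldots,k_m)$ and $(k_{\pi(1)},\ldots,k_{\pi(m)})$ related by a permutation $\pi\in\mathbb{S}_m$ contribute the same multinomial coefficient, since the denominator depends only on the multiset $\{k_1,\ldots,k_m\}$. Sorting any such tuple in weakly decreasing order produces a unique partition $\lambda\in\mathcal{P}(n,m)$ (after padding with zeros to length $m$, as the definition allows). Conversely, each partition $\lambda=(\lambda_1,\ldots,\lambda_m)\in\mathcal{P}(n,m)$ arises from exactly $\mathrm{Perm}(\lambda)$ distinct ordered tuples, this number being by definition the count of distinct rearrangements of $\lambda$. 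Grouping the sum by the partition obtained in this way yields
\[
m^n = \sum_{\lambda\in\mathcal{P}(n,m)} \mathrm{Perm}(\lambda)\cdot
\frac{n!}{\lambda_1!\,\lambda_2!\cdots\lambda_m!},
\]
which is the claimed identity.

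There is essentially no obstacle here: the only thing to verify carefully is the bookkeeping, namely that padding partitions with zeros to length exactly $m$ is compatible with the multinomial coefficient (the extra $0!=1$ factors do not change anything) and that $\mathrm{Perm}(\lambda)$, as defined via the formula $m!/(k_n!\cdots k_0!)$, really counts the ordered tuples in $(\mathbb{Z}^{\geq 0})^m$ whose sorted form is $\lambda$. Both are immediate from the permutation formula with repetition recalled just before the statement, so the proof amounts to combining the multinomial theorem with this elementary orbit-counting observation.
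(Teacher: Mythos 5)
Your proof is correct and follows essentially the same route as the paper: specialize the multinomial theorem at $x_1=\cdots=x_m=1$ to get the sum over weak compositions, then group the compositions into $\mathbb{S}_m$-orbits of partitions, each orbit having $\mathrm{Perm}(\lambda)$ elements. No issues.
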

\begin{proof}
The multinomial expansion 
\begin{align*}
(x_1+x_2+\cdots+x_m)^n
=\sum_{a_1+a_2+\cdots+a_m=n}\frac{n!}{a_1!a_2!\cdots  a_m!}
x_1^{a_1}x_2^{a_2}\cdots x_m^{a_m}
\end{align*}
implies that  $m^n=\sum\limits_{a_1+a_2+\cdots+a_m=n}
\frac{n!}{a_1! a_2!\cdots a_m!}$.
Now the conclusion can be drawn from
\begin{align*}
&\quad\,\left\{(a_1,a_2,\cdots, a_m)\in\Bbb N^m\mid a_1+a_2+\cdots +a_m=n\right\}\\
&= \left\{\Bbb S_m\cdot \lambda\mid \lambda \in \mathcal{P}(n,m), 
\sigma\cdot \lambda=(\lambda_{\sigma(1)}, \lambda_{\sigma(2)},
\cdots, \lambda_{\sigma(m)}), \forall \sigma\in \Bbb S_m\right\}.
\end{align*}
\end{proof}

Let $x=a_1a_2\cdots a_k\in  X^k$, define
$
\sigma_x=\sigma_{a_1}\sigma_{a_2}\cdots\sigma_{a_k}$,
$\tau_x=\tau_{a_k}\cdots\tau_{a_2}\tau_{a_1}.
$
Denote 
\[
\Psi_k(a)=D^{k-1}(a)D^{k-2}(a)\cdots D(a)a=\Psi_{-k}(D^{k-1}(a))\in X^k,\quad 
\forall a\in X.
\]

%
%

\begin{lemma}\label{ExchangRulem=1}
For any $k\in\Bbb Z^+$, $x, y\in X$, we have 
\[
\mathcal{O}\left(\Psi_k(x)y\right)
=\mathcal{O}\left(\sigma_{\Psi_k(x)}(y)\Psi_k(\tau_{y}(x))\right).
\]
\end{lemma}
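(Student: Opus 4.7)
The plan is to prove the orbit equality by induction on $k$, by exhibiting an explicit $\pi\in\mathbb{S}_{k+1}$ that moves $y$ from the rightmost position to the leftmost one, one swap at a time. The base case $k=1$ is immediate: $s_1\cdot(xy)=r(x,y)=(\sigma_x(y),\tau_y(x))$, which equals $(\sigma_{\Psi_1(x)}(y),\Psi_1(\tau_y(x)))$ by definition.

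For the inductive step, factor $\Psi_k(x)\,y = D^{k-1}(x)\cdot\Psi_{k-1}(x)\,y$. Applying the induction hypothesis to the last $k$ coordinates (via the embedding $\mathbb{S}_k\hookrightarrow\mathbb{S}_{k+1}$ fixing the first coordinate) yields
\[
\Psi_k(x)\,y\;\sim\;D^{k-1}(x)\;\sigma_{\Psi_{k-1}(x)}(y)\;\Psi_{k-1}(\tau_y(x)).
\]
Now acting by $s_1$ replaces the first two entries with $r\bigl(D^{k-1}(x),\sigma_{\Psi_{k-1}(x)}(y)\bigr)$. The first new coordinate is $\sigma_{D^{k-1}(x)}\sigma_{\Psi_{k-1}(x)}(y)=\sigma_{\Psi_k(x)}(y)$, directly from the definition of $\sigma$ on tuples. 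Thus the proof reduces to the identity
\[
\tau_{\sigma_{\Psi_{k-1}(x)}(y)}\bigl(D^{k-1}(x)\bigr)\;=\;D^{k-1}(\tau_y(x)),\qquad(\star)
\]
since the second new coordinate then fuses with $\Psi_{k-1}(\tau_y(x))$ to produce $\Psi_k(\tau_y(x))$.

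I would verify $(\star)$ by a separate induction on $k$, using only two ingredients already recorded in the paper: the Remark's relation $\tau_i\circ D=D\circ\sigma_i^{-1}$ (equivalent to $\tau_i^{-1}\circ D=D\circ\sigma_i$), and the non-degenerate involutive identity $\tau_j(i)=\sigma^{-1}_{\sigma_i(j)}(i)$. Writing $z=\sigma_{\Psi_{k-2}(x)}(y)$ so that $\sigma_{\Psi_{k-1}(x)}(y)=\sigma_{D^{k-2}(x)}(z)$, the inductive step reads
\[
\tau_{\sigma_{D^{k-2}(x)}(z)}\bigl(D\cdot D^{k-2}(x)\bigr) = D\bigl(\sigma^{-1}_{\sigma_{D^{k-2}(x)}(z)}(D^{k-2}(x))\bigr) = D\bigl(\tau_z(D^{k-2}(x))\bigr),
\]
and the inner hypothesis then converts this to $D\cdot D^{k-2}(\tau_y(x))=D^{k-1}(\tau_y(x))$.

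The main obstacle is $(\star)$ itself: the asymmetry between the $\tau$-index $\sigma_{\Psi_{k-1}(x)}(y)$ and the argument $D^{k-1}(x)$ obscures how $D$ commutes through a $\tau$. The trick is to peel off a single $D$ via $\tau_i D=D\sigma_i^{-1}$, then use the involutive identity $\sigma^{-1}_{\sigma_i(j)}(i)=\tau_j(i)$ to convert the resulting $\sigma^{-1}$ of a compound index back into a $\tau$, reshaping the expression into precisely the form demanded by induction.
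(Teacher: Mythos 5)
Your proof is correct and is essentially the paper's argument repackaged: the paper slides $y$ leftward through $\Psi_k(x)$ in a single braid-diagram pass and shows the emitted letters satisfy $a_{i+1}=D(a_i)$ via $\tau_{z_i}^{-1}\circ D=D\circ\sigma_{z_i}$ together with involutivity, which is exactly your identity $(\star)$ unrolled crossing by crossing. Your formulation as a double induction with $(\star)$ isolated is just a different bookkeeping of the same one-crossing computation.
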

\begin{proof}
\begin{figure}[h!]
\begin{tikzpicture}
\draw[->](8,2)--(0,0);
\draw[->](6,2)--(8,0);
\draw[->](4,2)--(6,0);
\draw[->](2,2)--(4,0);
\draw[->](0, 2)--(2, 0);
\node[above] at (8,2) {$y$}; 
\node[above] at (6,2) {$x_1$}; 
\node[above] at (4,2) {$x_2$};
\node[above] at (2, 2) {$x_{k-1}$};
\node[above] at (0, 2) {$x_k$};
\node[below] at (0,0) {$z_k$}; 
\node[below] at (2,0) {$a_k$};
\node[below] at (4,0) {$a_{k-1}$};
\node[below] at (6, 0) {$a_2$};
\node[below] at (8, 0) {$a_1$};
\node at (2.2, 0.8) {$z_{k-1}$};
\node at (3, 2.2) {$\cdots$};
\node at (5, -0.2) {$\cdots$};
\node at (6, 1.2) {$z_1$};
\end{tikzpicture}
\caption{Exchange rule for the case $t=1$, $\forall k\in \Bbb Z^+$}
\label{Exchange rulem=1}
\end{figure}
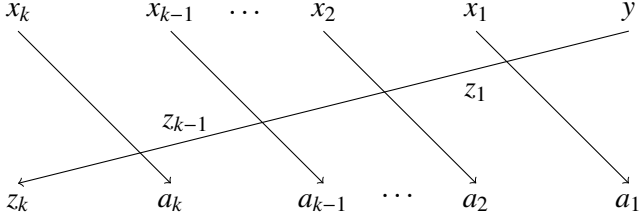
Suppose $\Psi_k(x)=x_kx_{k-1}\cdots x_2x_1\in X^k$, 
$r(x_1, y)=(z_1, a_1)$, $r(x_{i+1}, z_i)=(z_{i+1}, a_i)$ for $i=1, \cdots, k-1$, see Figure \ref{Exchange rulem=1}. 
As for $1\leq i\leq k-1$, we have
\[
\tau_{z_i}^{-1}D(a_i)=D\sigma_{z_i}(a_i)=D(x_i)=x_{i+1}
\Rightarrow D(a_i)=\tau_{z_i}(x_{i+1})=a_{i+1}.
\]
It is easy to see that $a_1=\tau_y(x_1)=\tau_y(x)$, and 
\begin{align*}
z_k&=\sigma_{x_k}(z_{k-1})
=\sigma_{x_k}\sigma_{x_{k-1}}(z_{k-2})
=\cdots
=\sigma_{x_k}\cdots \sigma_{x_3}\sigma_{x_2}(z_1)\\
&=\sigma_{x_k}\cdots \sigma_{x_2}\sigma_{x_1}(y)
=\sigma_{\Psi_k(x)}(y).
\end{align*}
So $\mathcal{O}\left(\Psi_k(x)y\right)
=\mathcal{O}(z_ka_ka_{k-1}\cdots a_1)
=\mathcal{O}\left(\sigma_{\Psi_k(x)}(y)\Psi_k(\tau_{y}(x))\right)$.
\end{proof}

\begin{lemma}[Exchange Rule]
For any $x, y\in X$, $t, k\in \Bbb Z^+$, we have 
$$\mathcal{O}\left(\Psi_k(x)\Psi_t(y)\right)
=\mathcal{O}\left(
\Psi_{-t}\left(\sigma_{\Psi_k(x)}D^{t-1}(y)\right)\Psi_k(\tau_{\Psi_t(y)}(x))
\right).$$
\end{lemma}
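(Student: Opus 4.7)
The plan is to induct on $t$, taking the case $t=1$ as Lemma \ref{ExchangRulem=1}. For the inductive step I would split $\Psi_t(y) = D^{t-1}(y)\cdot\Psi_{t-1}(y)$ and first apply Lemma \ref{ExchangRulem=1} to shift $\Psi_k(x)$ past the single letter $D^{t-1}(y)$, rewriting
\[
\mathcal{O}\bigl(\Psi_k(x)\Psi_t(y)\bigr) = \mathcal{O}\bigl(\sigma_{\Psi_k(x)}(D^{t-1}(y))\cdot \Psi_k(x_1)\cdot \Psi_{t-1}(y)\bigr),
\]
where $x_1 := \tau_{D^{t-1}(y)}(x)$. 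Then applying the inductive hypothesis to the trailing block $\Psi_k(x_1)\Psi_{t-1}(y)$ rewrites this as the orbit of
\[
\sigma_{\Psi_k(x)}(D^{t-1}(y))\cdot \Psi_{-(t-1)}\bigl(\sigma_{\Psi_k(x_1)}D^{t-2}(y)\bigr)\cdot \Psi_k\bigl(\tau_{\Psi_{t-1}(y)}(x_1)\bigr).
\]

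It remains to identify this expression with the claimed right-hand side. The trailing $\Psi_k$-block is immediate since $\tau_{\Psi_{t-1}(y)}(x_1) = \tau_{\Psi_{t-1}(y)}\tau_{D^{t-1}(y)}(x) = \tau_{\Psi_t(y)}(x)$ by the definition of $\tau_w$. For the leading block, writing $b := \sigma_{\Psi_k(x)}(D^{t-1}(y))$ and using the factorization $\Psi_{-t}(b) = b\cdot \Psi_{-(t-1)}(D^{-1}(b))$, everything reduces to the single equality
\[
\sigma_{\Psi_k(x_1)}\bigl(D^{t-2}(y)\bigr) = D^{-1}\bigl(\sigma_{\Psi_k(x)}(D^{t-1}(y))\bigr).
\]

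The main obstacle is verifying this last equality. I would invoke the relation $D\sigma_a = \tau_a^{-1}D$ recorded in the remark after the definition of the diagonal, extended by composition to $D\sigma_w = \tau_w^{-1}D$ for any word $w$; setting $z := D^{t-1}(y)$, this converts the displayed equality into the cleaner sub-identity
\[
\tau_{\Psi_k(\tau_z(x))}^{-1}(z) = \sigma_{\Psi_k(x)}(z).
\]
I would prove this sub-identity by a secondary induction on $k$. The base case $k=1$ is precisely the identity $\sigma_x(z) = \tau_{\tau_z(x)}^{-1}(z)$ recorded in Section 2.2 for non-degenerate involutive solutions. The inductive step uses the decomposition $\Psi_k(a) = D^{k-1}(a)\Psi_{k-1}(a)$ together with the Yang-Baxter identities \eqref{FormulaeYBE1} and the commutation $\tau_i D = D\sigma_i^{-1}$ to propagate the identity one $D$-level upward. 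This nested algebraic bookkeeping, rather than the braid manipulations themselves, is the technical heart of the argument.
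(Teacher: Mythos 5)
Your proposal is correct, and it is essentially the mirror image of the paper's argument: both induct on $t$ with Lemma \ref{ExchangRulem=1} as base case, but the paper peels off the \emph{last} letter via $\Psi_t(y)=\Psi_{t-1}(D(y))\,y$, applying the inductive hypothesis first and then Lemma \ref{ExchangRulem=1}, whereas you peel off the \emph{first} letter $D^{t-1}(y)$ and apply the two tools in the opposite order. Both routes reduce to a single gluing identity ensuring that the two pieces of the leading block concatenate into one $\Psi_{-t}$-block: the paper checks $D(b_1)=b_2$ by combining $D\sigma_w=\tau_w^{-1}D$ with a relation read off its second braid diagram, while you isolate the cleaner statement $\tau_{\Psi_k(\tau_z(x))}^{-1}(z)=\sigma_{\Psi_k(x)}(z)$. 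That identity is true and your secondary induction on $k$ does close: writing $\Psi_k(x)=\Psi_{k-1}(D(x))\,x$, the inductive step needs exactly $\tau_{\sigma_x(z)}(D(x))=D(\tau_z(x))$, which follows from $\tau_{\tau_z(x)}\tau_{\sigma_x(z)}=\tau_z\tau_x$ (an instance of \eqref{FormulaeYBE1}) together with $D(x)=\tau_x^{-1}(x)$; this is the one place where the Yang--Baxter axioms genuinely enter and it should be written out. You could also avoid the nested induction entirely: reading the crossings of the braid diagram in the proof of Lemma \ref{ExchangRulem=1} from bottom to top via involutivity gives $y=\tau_{a_1}\cdots\tau_{a_k}(z_k)=\tau_{\Psi_k(\tau_y(x))}(z_k)$, hence $\sigma_{\Psi_k(x)}(y)=z_k=\tau_{\Psi_k(\tau_y(x))}^{-1}(y)$ for free. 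Your route trades the paper's diagram-reading for a reusable closed-form identity, at the cost of somewhat more algebraic bookkeeping.
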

\begin{proof}
According to Lemma \ref{ExchangRulem=1}, the formula holds for 
any $k\in \Bbb Z^+$ and $t=1$. 
Suppose the formula  holds for  $\mathcal{O}\left(\Psi_k(x)\Psi_{t-1}(y)\right)$, 
we prove the formula holds for $\mathcal{O}\left(\Psi_k(x)\Psi_t(y)\right)$ 
by induction. 
\begin{align*}
&\quad\mathcal{O}\left(\Psi_k(x)\Psi_t(y)\right)\\
&=\mathcal{O}\left(\Psi_k(x)\Psi_{t-1}(D(y))y\right)\\
&=\mathcal{O}\left(\Psi_{-(t-1)}\left(\sigma_{\Psi_k(x)}D^{t-2}(D(y))\right)\Psi_k(\tau_{\Psi_{t-1}(D(y))}(x))y\right)\\
&=\mathcal{O}\left(\Psi_{-(t-1)}\left(\sigma_{\Psi_k(x)}D^{t-1}(y)\right)
\sigma_{\Psi_k(\tau_{\Psi_{t-1}(D(y))}(x))}(y)\Psi_k(\tau_y\tau_{\Psi_{t-1}(D(y))}(x))\right)\\
&=\mathcal{O}\left(\Psi_{-(t-1)}\left(\sigma_{\Psi_k(x)}D^{t-1}(y)\right)
\sigma_{\Psi_k(\tau_{\Psi_{t-1}(D(y))}(x))}(y)\Psi_k(\tau_{\Psi_{t}(y)}(x))\right).
\end{align*}
As depicted in Figure \ref{Exchange rulemn}, we denote 
\begin{align*}
\Psi_k(x)&=x_kx_{k-1}\cdots x_1, &
\Psi_k(\tau_{\Psi_{t}(y)}(x))&=a_ka_{k-1}\cdots a_1,\\
\Psi_t(y)&=y_ty_{t-1}\cdots y_1,&
b_tb_{t-1}\cdots b_2&=\Psi_{-(t-1)}\left(\sigma_{\Psi_k(x)}D^{t-1}(y)\right), \\
b_1&=\sigma_{\Psi_k(\tau_{\Psi_{t-1}(D(y))}(x))}(y),&
z_kz_{k-1}\cdots z_1&=\Psi_k(\tau_{\Psi_{t-1}(D(y))}(x)).
\end{align*}
\begin{figure}[h!]
\begin{tikzpicture}
\draw[->](10.8,3)--(5.4,0);
\draw[->](9,3)--(3.6,0);
\draw[->](7.2,3)--(1.8,0);
\draw[->](5.4,3)--(0,0);
\draw[->](3.6,3)--(10.8,0);
\draw[->](1.8,3)--(9,0);
\draw[->](0, 3)--(7.2, 0);
\node[above] at (10.8,3) {$y_1$}; 
\node[above] at (9,3) {$y_2$}; 
\node[above] at (7.2,3) {$y_3$}; 
\node[above] at (5.4,3) {$y_t$}; 
\node[above] at (3.6,3) {$x_1$};
\node[above] at (1.8, 3) {$x_{k-1}$};
\node[above] at (0, 3) {$x_k$};
\node[below] at (0,0) {$b_t$}; 
\node[below] at (1.8,0) {$b_3$};
\node[below] at (3.6,0) {$b_2$};
\node[below] at (5.4, 0) {$b_1$};
\node[below] at (7.2, 0) {$a_k$};
\node[below] at (9.3, 0) {$a_{k-1}$};
\node[below] at (10.8, 0) {$a_1$};
\node at (2.8, 3.2) {$\cdots$};
\node at (6.3, 3.2) {$\cdots$};
\node at (0.8, -0.2) {$\cdots$};
\node at (10.1, -0.2) {$\cdots$};
\node at (5.8, 0.75) {$z_k$};
\node at (6.6, 1.1) {$z_{k-1}$};
\node at (7.0, 1.3) {$\cdots$};
\node at (7.4, 1.6) {$z_{1}$};
\end{tikzpicture}
\caption{Exchange rule for  $\forall t,  k\in \Bbb Z^+$}
\label{Exchange rulemn}
\end{figure}
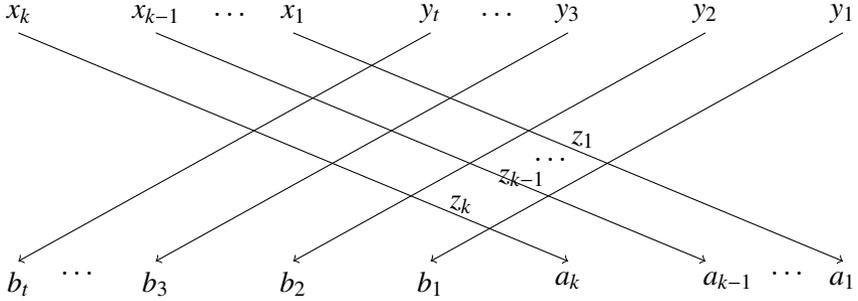
Since 
$
\tau_{\Psi_{t-1}(D(y))}(x)
=\tau_{y_ty_{t-1}\cdots y_2}(x)
=\tau_{y_2}\cdots \tau_{y_{t-1}}\tau_{y_t}(x)=z_1
$,  we have 
\begin{align*}
D(b_1)=D\sigma_{\Psi_{k}(z_1)}(y)
&=D\sigma_{z_k}\sigma_{z_{k-1}}\cdots \sigma_{z_1}(y)
=\tau^{-1}_{z_k}\tau_{z_{k-1}}^{-1}\cdots \tau_{z_1}^{-1}D(y),\\
\tau_{z_1}\cdots\tau_{z_{k-1}}\tau_{z_k}(b_2)
&=y_2=D(y), \quad \text{(see Figure \ref{Exchange rulemn})}. 
\end{align*}
We obtain $D(b_1)=b_2$. The proof is finished. 
\end{proof}

\begin{definition}
If $x=\Psi_{\lambda_1}(a_1)\Psi_{\lambda_2}(a_2)\cdots \Psi_{\lambda_k}(a_k)
$ 
such that $\lambda=(\lambda_1,\cdots,\lambda_k)\in \mathcal{P}(n,m)$, 
$\lambda_1\geq \lambda_2\geq\cdots\geq \lambda_k>0$, 
and 
\begin{align}
a_j\neq D^{-\lambda_j}\tau_{\Psi_{\lambda_{i+1}}(a_{i+1})
\cdots \Psi_{\lambda_{j-1}}(a_{j-1})}(a_i), \quad 
1\leq i<j\leq k, 
\label{eqk}
\end{align}
then we say $x$ is a $\lambda$-element and the orbit $\mathcal{O}(x)$ corresponds to the partition $\lambda$. 
And we let $\mathcal{B}(\lambda)$ be the union of all orbits corresponding to 
the partition $\lambda$.
\end{definition}
\begin{remark}\label{DefinitionL_Elemenet}
If $i<k<j$ such that 
$$D^{-\lambda_j}\tau_{\Psi_{\lambda_{i+1}}(a_{i+1})
\cdots \Psi_{\lambda_{j-1}}(a_{j-1})}(a_i)
=D^{-\lambda_j}\tau_{\Psi_{\lambda_{k+1}}(a_{k+1})
\cdots \Psi_{\lambda_{j-1}}(a_{j-1})}(a_k), 
$$ 
then 
we have $\tau_{\Psi_{\lambda_{i+1}}(a_{i+1})
\cdots \Psi_{\lambda_{k-1}}(a_{k-1})}(a_i)=\left(\tau_{\Psi_{\lambda_k}(a_k)}\right)
^{-1}(a_k)=D^{\lambda_k}(a_k)$. 
It is a contradiction with that $x$ is a $\lambda$-element. 
So there are $m-i$ choices for $a_{i+1}$, 
which implies that the number of $\lambda$-elements is  $\frac{m!}{(m-k)!}$.
\end{remark}

\begin{lemma}\label{ExchangePartsOfSameLength}
Suppose $\lambda=(\lambda_1,\cdots, \lambda_k)\in\mathcal{P}(n,m)$ with $\lambda_i=\lambda_{i+1}>0$ 
for some $i\in\{1,2,\cdots,k-1\}$. 
Let  $x=\Psi_{\lambda_1}(a_1)\Psi_{\lambda_2}(a_2)\cdots \Psi_{\lambda_k}(a_k)$
be a $\lambda$-element, and  
$x^\prime$ be the element obtained from $x$ by exchanging positions  of 
$\Psi_{\lambda_i}(a_i)$ and $\Psi_{\lambda_{i+1}}(a_{i+1})$ under the exchange rule.  
Then $x^\prime$ is still a $\lambda$-element. 
\end{lemma}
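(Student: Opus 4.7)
The plan is to verify the two conditions defining a $\lambda$-element for $x'$: the block shape $\Psi_{\lambda_1}(b_1)\cdots \Psi_{\lambda_k}(b_k)$, and the non-degeneracy condition \eqref{eqk}. For the shape, the Exchange Rule applied with $k=t=\lambda_i$ gives
\[
\mathcal{O}(\Psi_{\lambda_i}(a_i)\Psi_{\lambda_i}(a_{i+1}))=\mathcal{O}\bigl(\Psi_{-\lambda_i}(\sigma_{\Psi_{\lambda_i}(a_i)}D^{\lambda_i-1}(a_{i+1}))\Psi_{\lambda_i}(\tau_{\Psi_{\lambda_i}(a_{i+1})}(a_i))\bigr),
\]
and using $\Psi_{-k}(c)=\Psi_{k}(D^{-(k-1)}(c))$ I set $a_i'=D^{-(\lambda_i-1)}\sigma_{\Psi_{\lambda_i}(a_i)}D^{\lambda_i-1}(a_{i+1})$ and $a_{i+1}'=\tau_{\Psi_{\lambda_i}(a_{i+1})}(a_i)$, so that $x'=\Psi_{\lambda_1}(a_1)\cdots \Psi_{\lambda_{i-1}}(a_{i-1})\Psi_{\lambda_i}(a_i')\Psi_{\lambda_i}(a_{i+1}')\Psi_{\lambda_{i+2}}(a_{i+2})\cdots \Psi_{\lambda_k}(a_k)$ has the correct $\lambda$-shape.

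To verify \eqref{eqk} I perform a case analysis on pairs $(p,q)$ with $p<q$, using two tools: (a) $\sigma_y$ and $\tau_y$ are $\Bbb{S}_n$-orbit invariants, since \eqref{FormulaeYBE1} yields $\sigma_{\sigma_a(b)}\sigma_{\tau_b(a)}=\sigma_a\sigma_b$ and $\tau_{\tau_b(a)}\tau_{\sigma_a(b)}=\tau_b\tau_a$; and (b) $\mathcal{O}(\Psi_n(b))$ is a singleton, because every adjacent pair $(D^{m+1}(b),D^m(b))$ in $\Psi_n(b)$ is fixed by $r$ by the cited lemma $D(b)=c\iff r(c,b)=(c,b)$. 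For pairs with $\{p,q\}\cap\{i,i+1\}=\emptyset$, tool (a) shows that the $\tau$-operator appearing on the right-hand side of \eqref{eqk} is the same for $x$ and $x'$ — even when $p<i<i+1<q$, since $\Psi_{\lambda_i}(a_i')\Psi_{\lambda_i}(a_{i+1}')$ lies in the same orbit as $\Psi_{\lambda_i}(a_i)\Psi_{\lambda_i}(a_{i+1})$ — while $a_p,a_q$ are unchanged, so the condition for $x'$ reduces to the assumed one for $x$.

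For $(p,q)=(i,i+1)$ I argue by contradiction: assuming $a_{i+1}'=D^{-\lambda_i}(a_i')$ gives $\Psi_{\lambda_i}(a_i')\Psi_{\lambda_i}(a_{i+1}')=\Psi_{2\lambda_i}(a_{i+1}')$ as words, and tool (b) promotes the orbit equality from the Exchange Rule to the word equality $\Psi_{\lambda_i}(a_i)\Psi_{\lambda_i}(a_{i+1})=\Psi_{2\lambda_i}(a_{i+1}')$, forcing $a_i=D^{\lambda_i}(a_{i+1})$ and contradicting \eqref{eqk} for $x$ at $(i,i+1)$. The remaining mixed pairs — $p<i$ with $q\in\{i,i+1\}$, or $p\in\{i,i+1\}$ with $q>i+1$ — are handled analogously: if \eqref{eqk} fails for $x'$ at such a pair, iterated Exchange Rule moves bring the two offending blocks adjacent and produce a representative of $\mathcal{O}(x')=\mathcal{O}(x)$ containing a merged $\Psi$-block of length $\lambda_p+\lambda_q$; tool (b) then transports this merged configuration back to $x$ and yields a contradiction with \eqref{eqk} for $x$.

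The main obstacle is the mixed-pair case: the intermediate blocks get transformed during the iterated Exchange Rule applications, and precisely relating the merged $\Psi$-block in a representative of $\mathcal{O}(x)$ to a specific violation of \eqref{eqk} for the original $x$ requires careful bookkeeping. This will use the compatibility identity $\tau_i^{-1}\circ D=D\circ\sigma_i$ and the Yang-Baxter identities in \eqref{FormulaeYBE1} in tandem with the orbit invariance from tool (a).
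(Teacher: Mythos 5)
Your proposal is correct in substance, and it supplies precisely the ``direct verification'' that the paper omits: the paper's entire proof of this lemma is the single sentence ``It is a direct verification,'' so there is no written argument to compare against. Your identification of the new seeds $a_i'=D^{-(\lambda_i-1)}\sigma_{\Psi_{\lambda_i}(a_i)}D^{\lambda_i-1}(a_{i+1})$ and $a_{i+1}'=\tau_{\Psi_{\lambda_i}(a_{i+1})}(a_i)$, your treatment of pairs disjoint from $\{i,i+1\}$ via the orbit invariance of $\tau_y$ coming from \eqref{FormulaeYBE1}, and your treatment of the pair $(i,i+1)$ via the singleton-orbit property of $\Psi_n(b)$ are all sound. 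The only place where you stop at a sketch is the mixed-pair case, and there the detour through iterated Exchange Rule moves and a ``merged block'' is more roundabout than necessary (and risks circularity, since detecting a merged block in some representative of $\mathcal{O}(x)$ must still be traced back to a specific violation of \eqref{eqk} for $x$). Each mixed pair in fact closes with one line. Write $\ell=\lambda_i=\lambda_{i+1}$ and $u=\tau_{\Psi_{\lambda_{p+1}}(a_{p+1})\cdots\Psi_{\lambda_{i-1}}(a_{i-1})}(a_p)$ for $p<i$. From $\sigma_j=D^{-1}\tau_j^{-1}D$ one gets $\sigma_{\Psi_\ell(a_i)}^{-1}=D^{-1}\tau_{\Psi_\ell(a_i)}D$, hence $\sigma_{\Psi_\ell(a_i)}^{-1}D^{-1}=D^{-1}\tau_{\Psi_\ell(a_i)}$; so the failure $a_i'=D^{-\ell}(u)$ at $(p,i)$ forces
\[
D^{\ell-1}(a_{i+1})=\sigma_{\Psi_\ell(a_i)}^{-1}D^{-1}(u)=D^{-1}\tau_{\Psi_\ell(a_i)}(u),
\]
i.e.\ $a_{i+1}=D^{-\ell}\tau_{\Psi_\ell(a_i)}(u)$, contradicting \eqref{eqk} for $x$ at $(p,i+1)$. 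For the pair $(p,i+1)$, the identity $\tau_{\Psi_\ell(b)}(D^\ell(b))=b$ together with $\tau_{\Psi_\ell(a_{i+1}')}\tau_{\Psi_\ell(a_i')}=\tau_{\Psi_\ell(a_{i+1})}\tau_{\Psi_\ell(a_i)}$ reduces the assumed failure $a_{i+1}'=D^{-\ell}\tau_{\Psi_\ell(a_i')}(u)$ to $a_i=D^{-\ell}(u)$, contradicting \eqref{eqk} for $x$ at $(p,i)$; the pairs with $q>i+1$ are handled symmetrically. This closes the bookkeeping you flag as the main obstacle, using only the identities you already list.
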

\begin{proof}
It is a direct verification.
\end{proof}

\begin{lemma}\label{LemmaL_Elemenet}
If $x=\Psi_{\lambda_1}(a_1)\Psi_{\lambda_2}(a_2)\cdots \Psi_{\lambda_k}(a_k)$
is a $\lambda$-element for the partition $\lambda=(\lambda_1,\cdots,\lambda_k)\in\mathcal{P}(n,m)$ with $\lambda_1=\lambda_2=\cdots=\lambda_k$, 
then the number of  $\lambda$-elements in $\mathcal{O}(x)$ is $k!$.
\end{lemma}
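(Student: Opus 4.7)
The plan is to exhibit a free and transitive action of $\Bbb{S}_k$ on the set $\mathcal{O}_\lambda(x)$ of $\lambda$-elements lying in $\mathcal{O}(x)$, which will immediately give $|\mathcal{O}_\lambda(x)| = k!$. Setting $\ell := \lambda_1 = \cdots = \lambda_k$, I would first note that any $\lambda$-element has $k$ blocks of equal length $\ell$, so the block boundaries are forced, and each head $b_j$ is uniquely recovered as the last symbol of its block since $\Psi_\ell(b_j) = D^{\ell-1}(b_j)\cdots D(b_j) b_j$ ends in $b_j$. Hence counting $\lambda$-elements in $\mathcal{O}(x)$ is equivalent to counting the admissible head-tuples $(b_1,\ldots,b_k)$ that arise from elements of $\mathcal{O}(x)$.

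Next I would assemble the action. By Lemma \ref{ExchangePartsOfSameLength}, applying the Exchange Rule to a pair of adjacent equal-length blocks in a $\lambda$-element produces another $\lambda$-element in the same $\Bbb{S}_n$-orbit. Because the braiding $c$ satisfies the Yang-Baxter equation, these block-level swaps satisfy the braid relations, so sending $s_i$ to the swap of blocks $i$ and $i+1$ extends to a well-defined action of $\Bbb{S}_k$ on $\mathcal{O}_\lambda(x)$. Orbiting $x$ under this action immediately produces $k!$ candidate elements $\sigma \cdot x$ inside $\mathcal{O}_\lambda(x)$.

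To upgrade this to exactly $k!$ distinct elements and to see that they exhaust $\mathcal{O}_\lambda(x)$, I would argue freeness and completeness. For freeness, assume $\sigma \cdot x = x$ for some $\sigma \neq e$; tracing a reduced expression of $\sigma$ through the Exchange Rule writes the head-tuple of $\sigma \cdot x$ as an iterated composition of $\sigma_*$- and $\tau_*$-maps applied to $(a_1,\ldots,a_k)$. Equating this tuple with $(a_1,\ldots,a_k)$ forces an equality of the shape $a_j = D^{-\ell}\tau_{\Psi_\ell(a_{i+1})\cdots\Psi_\ell(a_{j-1})}(a_i)$ for some $i < j$, directly contradicting \eqref{eqk}, by the same mechanism employed in Remark \ref{DefinitionL_Elemenet}. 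For completeness, any $y = \theta \cdot x \in \mathcal{O}_\lambda(x)$ is handled by decomposing $\theta \in \Bbb{S}_n$ into adjacent transpositions and using that intra-block transpositions act trivially on each $\Psi_\ell$-block: consecutive entries inside a block are $D^i(a)$ and $D^{i-1}(a)$, and the lemma at the end of Section 2 (which says $D(j)=i \Leftrightarrow r(i,j)=(i,j)$) shows that $r$ fixes such a pair. Hence $\theta$ reduces to a product of inter-block moves, each of which is absorbed into a block exchange, realizing $y$ as $\sigma \cdot x$ for some $\sigma \in \Bbb{S}_k$.

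The hardest part will be the freeness argument: unwinding a reduced word of $\sigma$ through the Exchange Rule is bookkeeping-heavy, and one must locate the precise indices $(i,j)$ whose forced equality clashes with \eqref{eqk}. Fortunately Remark \ref{DefinitionL_Elemenet} already contains the prototype of this contradiction, and I expect it to propagate by induction on the length of the reduced expression for $\sigma$, with the base case $\sigma = s_i$ following from a direct reading of the Exchange Rule on two equal-length blocks.
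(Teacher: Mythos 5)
Your strategy---realizing the $\lambda$-elements of $\mathcal{O}(x)$ as a free transitive $\Bbb{S}_k$-set under block swaps---is genuinely different from the paper's argument, which instead inducts on the number $k$ of blocks: it moves each block $\Psi_{\lambda_1}(a_l)$ to the tail position via the Exchange Rule, checks that the resulting tails $\tau_{\Psi_{\lambda_1}(a_{l+1})\cdots\Psi_{\lambda_1}(a_{k})}(a_l)$ are pairwise distinct (a single application of the defining condition \eqref{eqk} together with the identity $\bigl(\tau_{\Psi_{\lambda_1}(a)}\bigr)^{-1}(a)=D^{\lambda_1}(a)$), and concludes $k!=k\cdot(k-1)!$ by induction on the prefix. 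That organization buys two things your plan must pay for separately: it never needs the block swaps to assemble into a well-defined $\Bbb{S}_k$-action (which requires not just the braid relations you invoke but also involutivity of the block-level swap, i.e.\ $r^2=\id$ and not merely the Yang--Baxter equation), and it replaces your global freeness claim by one pairwise-distinctness check on tails.

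The genuine gap is in your completeness (transitivity) step. You propose to take an arbitrary $\theta\in\mathfrak{G}(x,y)$ with $y$ a $\lambda$-element, write $\theta$ as a product of adjacent transpositions, and sort each factor into ``intra-block'' (trivial, since $r(D(a),a)=(D(a),a)$) or ``inter-block'' (absorbed into a block exchange). This classification only makes sense when the element currently being acted on has the form $\Psi_\ell(b_1)\cdots\Psi_\ell(b_k)$; the intermediate words $s_{i_j}\cdots s_{i_1}\cdot x$ occurring along a reduced expression of $\theta$ generically destroy the block structure, so a transposition in the middle of the word is neither intra- nor inter-block, and the asserted reduction of $\theta$ to a product of block swaps does not follow. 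Repairing this would essentially require knowing $\mathfrak{G}(x,x)=\Bbb{S}_{\lambda_1}\times\cdots\times\Bbb{S}_{\lambda_k}$ together with a normal form for $\mathfrak{G}(x,y)$, but the stabilizer is only computed in Theorem \ref{MultinomialConjecture}, which depends on the present lemma, so you would be arguing in a circle. (Your freeness step is also only sketched, but it is the more repairable of the two: compare tail blocks first, as above, and induct on the prefix.) The cleanest fixes are either to adopt the paper's tail-based induction outright, or to prove only the lower bound $k!\le\#\{\lambda\text{-elements in }\mathcal{O}(x)\}$ and observe that the multinomial count in Theorem \ref{MultinomialConjecture} then forces equality.
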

\begin{proof}
If $\lambda=(\lambda_1)=(n)\in \mathcal{P}(n,m)$, then there is exactly one $\lambda$-element in $\mathcal{O}(x)$.
Suppose the conclusion holds for partitions of $\mathcal{P}(n,m)$ with $k$ non-zero parts and 
the $k$ parts are the same. 

Let $x_{k+1}=\Psi_{\lambda_1}(a_1)\Psi_{\lambda_1}(a_2)\cdots \Psi_{\lambda_1}(a_k)
\Psi_{\lambda_1}(a_{k+1})$
be a $\lambda$-element for the partition $\lambda=(\lambda_1,\cdots,\lambda_1)\in\mathcal{P}(n,m)$.
Then 
$$a_j\neq D^{-\lambda_1}\tau_{\Psi_{\lambda_{1}}(a_{i+1})
\cdots \Psi_{\lambda_{1}}(a_{j-1})}(a_i)$$
 for $1\leq i<j\leq k+1$. 
According  to the exchange rule, if we move the term $\Psi_{\lambda_1}(a_l)$
to the tail of $x_{k+1}$ for $l=2,\cdots, k+1$,  then we obtain 
$$
x_l=\cdots \Psi_{\lambda_1}\left(
\tau_{\Psi_{\lambda_1}(a_{l+1})\Psi_{\lambda_1}(a_{l+2})\cdots\Psi_{\lambda_1}(a_{k+1})}
(a_{l})\right)\in\mathcal{O}(x_{k+1}).
$$
Those $x_l$ are $\lambda$-elements by  Lemma \ref{ExchangePartsOfSameLength}. 
Suppose $x_{l_1}=x_{l_2}$ for $l_1<l_2$,  then 
\[
\tau_{\Psi_{\lambda_1}(a_{l_1+1})\Psi_{\lambda_1}(a_{l_1+2})\cdots\Psi_{\lambda_1}(a_{k+1})}(a_{l_1})
=\tau_{\Psi_{\lambda_1}(a_{l_2+1})\Psi_{\lambda_1}(a_{l_2+2})\cdots\Psi_{\lambda_1}(a_{k+1})}(a_{l_2}), 
\]
which implies that 
$$D^{\lambda_1}(a_{l_2})
=\left(\tau_{\Psi_{\lambda_1}(a_{l_2})}\right)^{-1}(a_{l_2})
=\tau_{\Psi_{\lambda_1}(a_{l_1+1})\Psi_{\lambda_1}(a_{l_1+2})\cdots\Psi_{\lambda_1}(a_{l_2-1})}(a_{l_1}).
$$
It is a contradiction.  
In other words, the tails of $\lambda$-elements $x_l$ for $l=1,\cdots, k+1$ 
are different, so the number of $\lambda$-elements in $\mathcal{O}(x_{k+1})$ is  $(k+1)!$ by induction. 
\end{proof}

\begin{lemma}\label{NumberObits}
Let $\lambda\in \mathcal{P}(n,m)$, 
then the number of orbits in $\mathcal{B}(\lambda)$ is  $\rm{Perm}(\lambda)$.
\end{lemma}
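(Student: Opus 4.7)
The plan is to reduce Lemma \ref{NumberObits} to a local count of $\lambda$-elements per orbit. Writing $\lambda$ as $(n^{k_n},\ldots,1^{k_1})$ with $k = k_1+\cdots+k_n$ nonzero parts and $k_0 = m-k$, we have $\mathrm{Perm}(\lambda) = \frac{m!}{k_0!k_1!\cdots k_n!}$, while Remark \ref{DefinitionL_Elemenet} gives $\frac{m!}{(m-k)!} = \frac{m!}{k_0!}$ $\lambda$-elements in total. It therefore suffices to show that every orbit $\mathcal{O} \subseteq \mathcal{B}(\lambda)$ contains exactly $\prod_{i \geq 1} k_i!$ distinct $\lambda$-elements, since dividing the two counts then yields $\mathrm{Perm}(\lambda)$ orbits.

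For the lower bound, I would fix a $\lambda$-element $x = \Psi_{\lambda_1}(a_1)\cdots\Psi_{\lambda_k}(a_k)$ and note that the weakly-decreasing tuple $(\lambda_1,\ldots,\lambda_k)$ partitions the block positions into consecutive runs of equal length, the $i$-th of which contains $k_i$ positions. Iterating Lemma \ref{ExchangePartsOfSameLength} realises any adjacent transposition of blocks inside a single run, hence any permutation of that run, all the while staying in $\mathcal{O}(x)$ and remaining a $\lambda$-element. Running the distinctness argument from the proof of Lemma \ref{LemmaL_Elemenet} on each run in turn shows that the resulting $\prod_{i\geq 1} k_i!$ $\lambda$-elements are pairwise distinct, and distinctness across different runs is automatic since the runs occupy disjoint position intervals.

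For the matching upper bound, any other $\lambda$-element $y = \Psi_{\lambda_1}(b_1)\cdots\Psi_{\lambda_k}(b_k)$ in $\mathcal{O}(x)$ has the rigid block-length profile $(\lambda_1,\ldots,\lambda_k)$ forced by the weakly-decreasing convention in the definition of a $\lambda$-element. Hence each $b_j$ occupies a slot whose length is prescribed by $\lambda$, so $y$ differs from $x$ only by a rearrangement of base letters within each length-$i$ run. These configurations were all produced already in the lower-bound step, so no further $\lambda$-elements appear.

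The main obstacle will be making the ``rigidity'' step precise: one must rule out the possibility that the orbit contains a $\lambda$-element in which a base letter has migrated between runs of different lengths. The key point, which I would verify by induction on the number of distinct block lengths using the exchange rule, is that any orbit move preserves the multiset of block lengths together with their decreasing order, so no block of length $\lambda_i$ can end up in a slot reserved for a block of length $\lambda_j \neq \lambda_i$. The distinctness step of the lower bound is a direct transcription of the contradiction $D^{\lambda_1}(a_{l_2}) = \tau_{\Psi_{\lambda_1}(a_{l_1+1})\cdots \Psi_{\lambda_1}(a_{l_2-1})}(a_{l_1})$ appearing in the proof of Lemma \ref{LemmaL_Elemenet}.
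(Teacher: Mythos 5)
Your proposal is essentially the paper's own proof: the paper likewise divides the total count $m!/k_0!$ of $\lambda$-elements (Remark \ref{DefinitionL_Elemenet}) by the per-orbit count $k_n!\cdots k_1!$ obtained from Lemmas \ref{ExchangePartsOfSameLength} and \ref{LemmaL_Elemenet}. The only difference is that you spell out, as an explicit upper-bound/rigidity step, what the paper leaves implicit in its one-line appeal to those two lemmas.
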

\begin{proof}
The partition $\lambda\in \mathcal{P}(n,m)$  can be described as 
$n^{k_n}, (n-1)^{k_{n-1}},\cdots, 1^{k_1}, 0^{k_0}$, where $k_i$ is the number of parts of $\lambda$
equal to $i$. Suppose $x$ is any $\lambda$-element, then the number of $\lambda$-elements in 
$\mathcal{O}(x)$ is $k_n!k_{n-1}!\cdots k_2!k_1!$ according to  Lemma 
\ref{ExchangePartsOfSameLength} and \ref{LemmaL_Elemenet}. 
The total number of  $\lambda$-elements is $\frac{m!}{k_0!}$ according to  Remark \ref{DefinitionL_Elemenet}. So the number of orbits in $\mathcal{B}(\lambda)$ is
\[
\frac{\frac{m!}{k_0!}}{k_n!k_{n-1}!\cdots k_2!k_1!}=\rm{Perm}(\lambda).
\]
\end{proof}

\begin{lemma}\label{AllElement}
For any $y\in X^n$, there exists a partition 
$\lambda\in\mathcal{P}(n,m)$ and a $\lambda$-element $x$
such that $y\in\mathcal{O}(x)$. As a consequence,  
$X^n=\bigcup_{\lambda \in \mathcal{P}(n,m)} \mathcal{B}(\lambda)$.
\end{lemma}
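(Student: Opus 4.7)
The equality $X^n = \bigcup_{\lambda \in \mathcal{P}(n,m)} \mathcal{B}(\lambda)$ is a direct consequence of the first assertion, so I focus on producing, for each $y \in X^n$, a partition $\lambda \in \mathcal{P}(n,m)$ and a $\lambda$-element $x$ with $y \in \mathcal{O}(x)$.

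Writing $y = i_1 i_2 \cdots i_n$, the starting observation is that $y$ always admits the trivial decomposition $y = \Psi_1(i_1) \Psi_1(i_2) \cdots \Psi_1(i_n)$ into $n$ blocks of length one; in particular, the set of pairs consisting of an element $z \in \mathcal{O}(y)$ together with a $\Psi$-block decomposition $z = \Psi_{\mu_1}(b_1) \cdots \Psi_{\mu_s}(b_s)$ (with each $\mu_\ell \geq 1$) is nonempty. I would pick such a pair for which the block count $s$ is minimal. Next, I would apply the exchange rule repeatedly, as a bubble sort, to swap adjacent blocks until their lengths are in non-increasing order; since each swap preserves the orbit and the multiset of block sizes, this yields $x = \Psi_{\lambda_1}(a_1) \Psi_{\lambda_2}(a_2) \cdots \Psi_{\lambda_s}(a_s) \in \mathcal{O}(y)$ with $\lambda_1 \geq \lambda_2 \geq \cdots \geq \lambda_s > 0$ and $\lambda_1 + \cdots + \lambda_s = n$, and $s$ still minimal.

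Setting $\lambda = (\lambda_1, \ldots, \lambda_s)$, I would next argue that $x$ is a $\lambda$-element. If condition \eqref{eqk} were to fail for some $1 \leq i < j \leq s$, I would transport $\Psi_{\lambda_j}(a_j)$ leftward past the intermediate blocks $\Psi_{\lambda_{j-1}}(a_{j-1}), \ldots, \Psi_{\lambda_{i+1}}(a_{i+1})$ via the exchange rule, ending with a size-$\lambda_j$ block directly to the right of $\Psi_{\lambda_i}(a_i)$. The failure of \eqref{eqk} is designed precisely so that the base of this transported block becomes $D^{-\lambda_j}(a_i)$; the identity $\Psi_k(D^t(c))\Psi_t(c) = \Psi_{k+t}(c)$ then collapses the resulting adjacent pair into a single $\Psi_{\lambda_i + \lambda_j}$-block, producing a representative of $\mathcal{O}(y)$ with $s-1$ blocks and contradicting the minimality of $s$. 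Once $x$ is known to be a $\lambda$-element, Remark \ref{DefinitionL_Elemenet} bounds the number of blocks by $m$, so $\lambda \in \mathcal{P}(n, m)$, completing the proof.

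The chief technical hurdle will be justifying the claim that the failure of \eqref{eqk} yields exactly the new base $D^{-\lambda_j}(a_i)$: one must carefully track how the base of a $\Psi$-block transforms as it is ferried past a chain of other $\Psi$-blocks under iterated exchange-rule computations. The identities $\tau_i^{-1} \circ D = D \circ \sigma_i$ and, for any word $w$, $\sigma_w = D^{-1} \tau_w^{-1} D$ are what allow the accumulated $\sigma$-expressions on the moved block to be rewritten as the $\tau$-expression appearing on the right-hand side of \eqref{eqk}. This computation parallels the proof of the exchange rule itself and is entirely consistent with the internal consistency check carried out in Remark \ref{DefinitionL_Elemenet}.
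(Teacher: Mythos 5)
Your proposal is correct and follows essentially the same route as the paper's proof: decompose into $\Psi$-blocks, sort them by length with the exchange rule, merge two blocks whenever condition \eqref{eqk} fails (your minimal-block-count argument is just the well-ordering form of the paper's induction on the length $l(y)$ of the canonical decomposition), and bound the number of blocks by $m$ via the pigeonhole argument of Remark \ref{DefinitionL_Elemenet}. Your identification of $\sigma_w = D^{-1}\tau_w^{-1}D$ as the tool converting the accumulated $\sigma$-transport of the leftward-moved block into the $\tau$-expression of \eqref{eqk} is exactly the computation needed, and is consistent with the paper's (more implicit) treatment of the merging step.
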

\begin{proof}
A composition of $n$ is a sequence $\mu=(\mu_1,\cdots,\mu_k)$ such that 
$\mu_i\in\Bbb{Z}^{+}$
for $i=1,\cdots, k$ and $\mu_1+\cdots+\mu_k=n$. 
The length of a composition is the number of its parts, denoted as $l(\mu)$. 
For any $y\in X^n$, we can write $y$
in the unique form $\Psi_{\mu_1}(a_1)\cdots \Psi_{\mu_k}(a_k)$
for a composition $\mu$ of $n$  such that 
$a_{j-1}\neq D^{\lambda_j}(a_j)$, $j=2,\cdots,k$.
As for convenience, we define  $l(y)=l(\mu)$. 
We prove $y\in\mathcal{O}(x)$ by induction on $l(y)$, where $x$ is a $\lambda$-element for 
some $\lambda\in \mathcal{P}(n, m)$.
\par 
If $l(y)=1$, then $y$ is already a $\lambda$-element for $\lambda=(n)\in \mathcal{P}(n, m)$. 
Suppose the conclusion holds for the case $l(y)\leq k-1$,  we prove it also holds for the case $l(y)=k$ in the following. 
\par 
We can rearrange $k$ parts of $y$ in an ordered way according to the exchange rule. That is to say, 
there exists a 
\[
y^\prime=\Psi_{\mu^\prime_1}(\alpha_1)\cdots \Psi_{\mu^\prime_{k}}(\alpha_{k})
\in\mathcal{O}(y), 
\]
where $(\mu^\prime_1,\cdots, \mu^\prime_k)$ is a permutation of $(\mu_1,\cdots, \mu_k)$ such 
that $\mu^\prime_1\geq \mu^\prime_2\geq \cdots\geq \mu^\prime_k$. 
\par
If $\alpha_j=D^{-\mu^\prime_j}\tau_{\Psi_{\mu^\prime_{i+1}}(\alpha_{i+1})
\cdots \Psi_{\mu^\prime_{j-1}}(\alpha_{j-1})}(\alpha_i)$ for some $i$ and $j$ with $1\leq i<j\leq k$, 
then we can  move $\Psi_{\mu_j^\prime}(\alpha_j)$ forward to join with 
$\Psi_{\mu_i^\prime}(\alpha_i)$ according to the exchange rule. 
In other words, there exists a $y^{\prime\prime}\in  X^n$, such that 
$y^{\prime\prime}\in \mathcal{O}(y^\prime)$
and $l(y^{\prime\prime})<l(y^\prime)\leq k$. 
By induction, $y^{\prime\prime}\in\mathcal{O}(x)\subseteq \mathcal B(\lambda)$ for some $\lambda\in\mathcal
P(n,m)$ and a $\lambda$-element $x$.  So $y\in\mathcal{O}(x)$. 
\par 
If $\alpha_j\neq D^{-\mu^\prime_j}\tau_{\Psi_{\mu^\prime_{i+1}}(\alpha_{i+1})
\cdots \Psi_{\mu^\prime_{j-1}}(\alpha_{j-1})}(\alpha_i)$ holds for $1\leq i<j\leq k$. Then 
$$
\alpha_k\neq D^{-\mu^\prime_k}\tau_{\Psi_{\mu^\prime_{i+1}}(\alpha_{i+1})
\cdots \Psi_{\mu^\prime_{k-1}}(\alpha_{k-1})}(\alpha_i)
\in X, \quad 
1\leq i\leq k-1.
$$
If $k>m$, then there exist some $i$ and $j$ with $i<j$, such that 
\[
D^{-\mu^\prime_k}\tau_{\Psi_{\mu^\prime_{i+1}}(\alpha_{i+1})
\cdots \Psi_{\mu^\prime_{k-1}}(\alpha_{k-1})}(\alpha_i)
=D^{-\mu^\prime_k}\tau_{\Psi_{\mu^\prime_{j+1}}(\alpha_{j+1})
\cdots \Psi_{\mu^\prime_{k-1}}(\alpha_{k-1})}(\alpha_j)
\in X, 
\]
since $\alpha_k\in X$ with $|X|=m$. So 
$$\tau_{\Psi_{\mu^\prime_{i+1}}(a_{i+1})
\cdots \Psi_{\mu^\prime_{j-1}}(a_{j-1})}(a_i)=\left(\tau_{\Psi_{\mu^\prime_j}(a_j)}\right)
^{-1}(a_j)=D^{\mu^\prime_j}(a_j).$$
It is a contradiction. 
So $k\leq m$. 
Now we see $y^\prime$ is a $\mu^\prime$-element, where
$\mu^\prime=(\mu^\prime_1,\cdots, \mu^\prime_k)\in\mathcal{P}(n,m)$. 
\end{proof}
\begin{example}
As for  $0121212020102\in X^{13}$ with  $X=\Bbb{Z}_3$, $r(i,j)=(j-1,i+1)\in X\times X$, 
$i, j\in X$, we have
\begin{align*}
\mathcal{O}(0121212020102)
&=\mathcal{O}\left(012\,{\color{blue}12\,120\,201}\,{\color{green}0}\,2\right)\\
&=\mathcal{O}\left(012\,{\color{green}1}\, {\color{blue}20\,201\,012}\,2\right)
   =\mathcal{O}\left(012\,120\,201\,012\,2\right)\\
&=\mathcal{O}\left(012\,120\,{\color{blue}201\,012}\,{\color{green}2}\right)\\
&=\mathcal{O}\left(012\,120\,{\color{green}2}\,{\color{blue}012\,120}\right)
   =\mathcal{O}\left(012\,120\,2012\,120\right)\\
&=\mathcal{O}\left({\color{blue}012\,120}\,{\color{green}2012}\,120\right)
   =\mathcal{O}\left({\color{green}2012}\,{\color{blue}120\,201}\,120\right)\\
&=\mathcal{O}\left(2012\,{\color{red}120\,201}\,{\color{blue}120}\right)\\
&=\mathcal{O}\left(2012\,{\color{blue}120}\,{\color{red}120\,201}\right)
=\mathcal{O}\left(2012\,120120\,201\right)\\
&=\mathcal{O}\left({\color{blue}2012}\,{\color{red}120120}\,201\right)
=\mathcal{O}\left({\color{red}012012}\,{\color{blue}2012}\,201\right).
\end{align*}
012012\,2012\,201 is a $(6,4,3)$-element. 
\end{example}


\begin{theorem}\label{MultinomialConjecture}
Let  $\lambda=(\lambda_1,\cdots, \lambda_k)\in \mathcal{P}(n,m)$. 
If $x$ is a $\lambda$-element,  then 
\[
\mathfrak{G}(x,x)=\Bbb{S}_{\lambda_1}\times 
\cdots\times \Bbb{S}_{\lambda_k}, \quad 
|\mathcal{O}(x)|=\frac{n!}{\lambda_1!\lambda_2!\cdots \lambda_k!}.
\] 
The number of orbits in $X^n$ is ${n+m-1\choose m-1}$.
\end{theorem}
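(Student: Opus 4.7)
The plan is to establish the stabilizer formula via a double counting that pins down everything at once.

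\textbf{Step 1 (easy inclusion).} I first show $\mathbb{S}_{\lambda_1}\times\cdots\times\mathbb{S}_{\lambda_k}\subseteq \mathfrak{G}(x,x)$. Write $x=\Psi_{\lambda_1}(a_1)\cdots \Psi_{\lambda_k}(a_k)$; inside a single block $\Psi_{\lambda_j}(a_j)=D^{\lambda_j-1}(a_j)\cdots D(a_j)a_j$, every consecutive pair has the form $(D^{i}(a_j),D^{i-1}(a_j))$, and by the cited Lemma (\cite[Lemma 3.7]{10.1093/imrn/rnab232}) $D(b)=c$ iff $r(c,b)=(c,b)$, so each such pair is a fixed point of $r$. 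Hence every adjacent transposition inside a block acts as identity on $w_x$, and the symmetric group on each block fixes $x$.

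\textbf{Step 2 (reverse inclusion via counting).} Rather than attack the opposite inclusion directly (which is the main obstacle), I combine orbit-stabilizer with the lemmas already proved. From Step 1, $|\mathfrak{G}(x,x)|\ge \lambda_1!\cdots\lambda_k!$, so
\[
|\mathcal{O}(x)|=\frac{n!}{|\mathfrak{G}(x,x)|}\le \frac{n!}{\lambda_1!\cdots\lambda_k!}.
\]
All orbits inside $\mathcal{B}(\lambda)$ have this same upper bound, and by Lemma~\ref{NumberObits} there are exactly $\mathrm{Perm}(\lambda)$ of them, giving
\[
|\mathcal{B}(\lambda)|\le \mathrm{Perm}(\lambda)\cdot \frac{n!}{\lambda_1!\cdots\lambda_k!}.
\]
Now summing over $\lambda\in\mathcal{P}(n,m)$ and invoking Lemma~\ref{AllElement} together with Lemma~\ref{MultinomialPartition}:
\[
m^n=|X^n|=\sum_{\lambda\in\mathcal{P}(n,m)}|\mathcal{B}(\lambda)|\le \sum_{\lambda\in\mathcal{P}(n,m)}\mathrm{Perm}(\lambda)\cdot \frac{n!}{\lambda_1!\cdots\lambda_k!}=m^n.
\]
Equality must therefore hold in every inequality, which forces $|\mathcal{O}(x)|=n!/(\lambda_1!\cdots\lambda_k!)$ for every $\lambda$-element $x$ and $\mathfrak{G}(x,x)=\mathbb{S}_{\lambda_1}\times\cdots\times\mathbb{S}_{\lambda_k}$.

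\textbf{Step 3 (total number of orbits).} By Lemma~\ref{AllElement} the orbits of $\mathbb{S}_n$ on $X^n$ are indexed by pairs $(\lambda,\mathcal{O})$ with $\lambda\in\mathcal{P}(n,m)$ and $\mathcal{O}\subseteq \mathcal{B}(\lambda)$, so the total count is $\sum_{\lambda\in\mathcal{P}(n,m)}\mathrm{Perm}(\lambda)$. This sum runs over all ordered tuples $(a_1,\ldots,a_m)\in\mathbb{N}^m$ with $a_1+\cdots+a_m=n$, which by stars-and-bars equals $\binom{n+m-1}{m-1}$.

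\textbf{Where the difficulty lies.} The conceptually delicate point is the opposite inclusion for the stabilizer: a priori one could imagine some element of $\mathbb{S}_n$ outside $\mathbb{S}_{\lambda_1}\times\cdots\times\mathbb{S}_{\lambda_k}$ fixing $x$ through some nontrivial cancellation of the $R_{i,j}$-weights and the $\sigma_i,\tau_j$ coordinates. The double-counting argument bypasses this possibility cleanly: Lemma~\ref{MultinomialPartition} has already been arranged precisely so that the upper bound obtained from Step 1 saturates the total cardinality $m^n$, leaving no slack.
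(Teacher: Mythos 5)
Your proposal is correct and follows essentially the same route as the paper: the easy inclusion $\Bbb{S}_{\lambda_1}\times\cdots\times\Bbb{S}_{\lambda_k}\subseteq\mathfrak{G}(x,x)$ plus the orbit-stabilizer upper bound, then the double count of $m^n=|X^n|$ against Lemmas \ref{AllElement}, \ref{NumberObits} and \ref{MultinomialPartition} to force equality everywhere. Your only additions are the explicit justification of the easy inclusion via the fixed-point characterization of $D$ and the stars-and-bars phrasing of the orbit count, both of which the paper leaves implicit.
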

\begin{proof}
Since $x$ is  a  $\lambda$-element,  $(\Bbb{S}_{\lambda_1}\times 
\cdots\times \Bbb{S}_{\lambda_k})\cdot x=x$.  We have 
$|\mathcal{O}(x)|\leq \frac{n!}{\lambda_1!\lambda_2!\cdots \lambda_m!}$ by 
 the orbit-stabilizer theorem.

According to  Lemma \ref{AllElement}, \ref{NumberObits} and \ref{MultinomialPartition}, we have 
\begin{align*}
m^n&=|X^n|=\sum_{\lambda\in\mathcal{P}(n,m)}|\mathcal{B}(\lambda)|
=\sum_{\lambda\in\mathcal{P}(n,m), \mathcal{O}(x)\,\text{corresponding to}\,\lambda} 
     |\mathcal{O}(x)|\cdot \rm{Perm}(\lambda)\\
&= \sum_{\lambda \in \mathcal{P}(n,m)}
\frac{n!}{\lambda_1!\lambda_2!\cdots \lambda_m!}\cdot \mathrm{Perm(\lambda)}.
\end{align*}
So $|\mathcal{O}(x)|= \frac{n!}{\lambda_1!\lambda_2!\cdots \lambda_k!}$ follows from 
the above identity. 
According to  the orbit-stabilizer theorem, $\mathfrak{G}(x,x)=\Bbb{S}_{\lambda_1}\times 
\cdots\times \Bbb{S}_{\lambda_k}$.

The number of orbits in $X^n$ is exactly the number of terms in the  
expansion of $(x_1+x_2+\cdots+x_m)^n$. So it equals to ${n+m-1\choose m-1}$.
\end{proof}
\begin{corollary}
\[
\sum_{\lambda\in\mathcal{P}(n,m)}\rm{Perm}(\lambda)={n+m-1\choose m-1}.
\]
\end{corollary}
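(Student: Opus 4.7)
The plan is to derive the identity immediately from the orbit count already established in Theorem \ref{MultinomialConjecture}, by combining it with the partition of $X^n$ given in Lemma \ref{AllElement} and the orbit count inside each $\mathcal{B}(\lambda)$ furnished by Lemma \ref{NumberObits}.

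Concretely, I would first observe that the decomposition
\[
X^n=\bigcup_{\lambda\in\mathcal{P}(n,m)}\mathcal{B}(\lambda)
\]
from Lemma \ref{AllElement} is a disjoint union of $\Bbb S_n$-stable subsets (two elements lying in distinct $\mathcal{B}(\lambda)$'s cannot be in the same orbit, since each orbit corresponds to a single partition). Hence the total number of $\Bbb S_n$-orbits in $X^n$ equals $\sum_{\lambda\in\mathcal{P}(n,m)}\#\{\text{orbits in }\mathcal{B}(\lambda)\}$. Then I would invoke Lemma \ref{NumberObits}, which says the number of orbits in $\mathcal{B}(\lambda)$ is exactly $\mathrm{Perm}(\lambda)$, so the total number of orbits equals $\sum_{\lambda\in\mathcal{P}(n,m)}\mathrm{Perm}(\lambda)$.

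Finally I would compare with Theorem \ref{MultinomialConjecture}, which states that the number of orbits in $X^n$ is $\binom{n+m-1}{m-1}$ (an assertion proved there via matching monomials in the multinomial expansion of $(x_1+\cdots+x_m)^n$). Equating the two expressions for the number of orbits yields the claimed identity.

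There is essentially no main obstacle, since every ingredient is already in place: the proof is a one-line bookkeeping argument combining the disjoint decomposition, the orbit count per stratum, and the global count. The only thing worth being careful about is emphasizing that the union in Lemma \ref{AllElement} is disjoint, so that summing orbit counts over $\lambda$ does not overcount; this is immediate because each orbit, determined by its $\lambda$-elements, belongs to a unique $\mathcal{B}(\lambda)$.
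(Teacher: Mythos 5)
Your proof is correct and is exactly the argument the paper intends: the corollary is stated as an immediate consequence of Theorem \ref{MultinomialConjecture}, obtained by equating the global orbit count $\binom{n+m-1}{m-1}$ with the sum over $\lambda$ of the per-stratum counts from Lemma \ref{NumberObits}. Your remark that the union in Lemma \ref{AllElement} must be disjoint is the same point the paper relies on implicitly (its proof of the theorem already uses $m^n=\sum_\lambda|\mathcal{B}(\lambda)|$), so there is no gap.
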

\begin{remark}
Let $M=\oplus_{y\in \mathcal{O}(x)}\k y$ be a $\Bbb{S}_n$-module induced by 
$ \mathcal{O}(x)$, then 
\[M \cong  \mathrm{Ind}_{\Bbb{S}_{\lambda_1}\times 
\cdots\times \Bbb{S}_{\lambda_k}}^{\Bbb S_n}\k \cong \mathrm{M}^\lambda, 
\]
where 
$\mathrm{M}^\lambda$ is the permutation module 
of $\Bbb{S}_n$ on $\lambda$-tabloids, see \cite{Fulton1997}. 
By abuse of notation, we also denote  $\oplus_{y\in \mathcal{O}(x)}\k y$ by $\mathcal{O}(x)$. 
\end{remark}
\begin{example}
Let $(\Bbb Z_3, r)$ with
$r(i, j)=(j-1, i+1)\in\Bbb Z_3\times \Bbb Z_3$  be a permutation solution,  
then $\Bbb{Z}_3^4$ is the union of the following orbits. 
\begin{align*}
\mathcal{O}(0120)&=\{0120\}\cong \mathrm{M}^{(4)},  \\
\mathcal{O}(1201)&=\{1201\}\cong \mathrm{M}^{(4)},  \quad
\mathcal{O}(2012)=\{2012\}\cong \mathrm{M}^{(4)},  \\
\mathcal{O}(0122)&=\{0122, 0110, 0020, 2120\}\cong \mathrm{M}^{(3,1)},\\
\mathcal{O}(0121)&=\{0121, 0100, 0220, 1120\}\cong \mathrm{M}^{(3,1)},\\
\mathcal{O}(1200)&\cong \mathrm{M}^{(3,1)},\quad 
\mathcal{O}(1202)\cong \mathrm{M}^{(3,1)},\quad
\mathcal{O}(2011)\cong \mathrm{M}^{(3,1)},\quad 
\mathcal{O}(2010)\cong \mathrm{M}^{(3,1)},\\
\mathcal{O}(0101)&=\{0101, 0221, 1121, 0200, 1100, 1220\}=\mathcal{O}(1220) \cong \mathrm{M}^{(2,2)},\\
\mathcal{O}(0112)&=\{0112, 0022, 2122, 0010, 2110, 2020\}=\mathcal{O}(2020)\cong \mathrm{M}^{(2,2)},\\
\mathcal{O}(1212)&=\{1212, 1002, 2202, 1011, 2211, 2001\}=\mathcal{O}(2001)\cong \mathrm{M}^{(2,2)},\\
\mathcal{O}(0102)&=\{0210, 1110, 0000, 0222, 1020, 1122, 2100, 0021, 0102, 2220, \\
&\quad\,\,\, 2121, 0111\}=\mathcal{O}(0111)\cong\mathrm{M}^{(2,1,1)},\\
\mathcal{O}(2000)&=\{2102, 0002, 2222, 2111, 0212, 0011, 1022, 2210, 2021, 1112, \\
&\quad\,\,\, 1010, 2000\}=\mathcal{O}(2021)\cong\mathrm{M}^{(2,1,1)},\\
\mathcal{O}(1210)&=\{1021, 2221, 1111, 1000, 2101, 2200, 0211, 1102, 1210, 0001, \\
&\quad\,\,\, 0202, 1222\}=\mathcal{O}(1222)\cong\mathrm{M}^{(2,1,1)}.
\end{align*}
The number of orbits is  $3+6+3+3={4+3-1\choose 3-1}$.
\end{example}

\subsection{The Nichols algebra $\mathfrak{B}(W_{X, r})$} 
Let $(X, r)$ be a non-degenerate  involutive  solution of the Yang-Baxter equation, $|X|=m\in\Bbb Z^{\geq 2}$.
The braided vector space $W_{X,r}$ is defined in Definition \ref{BraidedVectorSpace}. 
In this section, 
we calculate the dimension and the Gelfand–Kirillov dimension of the Nichols algebra
$\mathfrak{B}(W_{X,r})$ under some given conditions.

\begin{lemma}
For any $j, k\in X$, $R_{D(j), j}=R_{D\tau_k(j), \tau_k(j)}$.
\end{lemma}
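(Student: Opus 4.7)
The plan is to specialize the coefficient Yang--Baxter identity \eqref{YBEquation} to exploit the fixed-point structure of $D$. Recall that the cited Lemma gives $r(D(j),j)=(D(j),j)$ for every $j\in X$, which is equivalent to the two identities $\sigma_{D(j)}(j)=D(j)$ and $\tau_{j}(D(j))=j$. So the strategy is: choose the three ``input'' letters of the YBE so that these fixed-point simplifications collapse enough factors to leave only the two $R$-coefficients we wish to compare.

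Concretely, I would apply \eqref{YBEquation} with the substitution $i\mapsto D(j)$, $j\mapsto j$, $k\mapsto k$. The left-hand side then becomes
\[
R_{D(j),j}\,R_{\tau_j(D(j)),k}\,R_{\sigma_{D(j)}(j),\,\sigma_{\tau_j(D(j))}(k)}
=R_{D(j),j}\,R_{j,k}\,R_{D(j),\sigma_j(k)},
\]
after using $\tau_j(D(j))=j$ and $\sigma_{D(j)}(j)=D(j)$. The right-hand side becomes
\[
R_{j,k}\,R_{D(j),\sigma_j(k)}\,R_{\tau_{\sigma_j(k)}(D(j)),\,\tau_k(j)}.
\]
Cancelling the common factor $R_{j,k}R_{D(j),\sigma_j(k)}\in\k^{\times}$ from both sides gives
\[
R_{D(j),j}\;=\;R_{\tau_{\sigma_j(k)}(D(j)),\,\tau_k(j)}.
\]

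It therefore remains to check the set-theoretic identity $\tau_{\sigma_j(k)}(D(j))=D(\tau_k(j))$. For this I would invoke the relation $\tau_{\tau_k(j)}\tau_{\sigma_j(k)}=\tau_k\tau_j$ from \eqref{FormulaeYBE1}, which yields $\tau_{\sigma_j(k)}=\tau_{\tau_k(j)}^{-1}\tau_k\tau_j$. Evaluating at $D(j)$ and using $\tau_j(D(j))=j$ reduces the left side to $\tau_{\tau_k(j)}^{-1}(\tau_k(j))$, which is exactly $D(\tau_k(j))$ by the definition of the diagonal. This closes the argument.

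There is no real obstacle here: the entire proof is a single specialization of the cubic relation \eqref{YBEquation} combined with two applications of the involutive/non-degenerate data (the $D$-fixed-point lemma and the compatibility \eqref{FormulaeYBE1} for the $\tau$'s). The only bookkeeping to be careful about is correctly identifying which of the three slots of the YBE to send to $D(j)$, so that the simplification $r(D(j),j)=(D(j),j)$ is applied inside both $R$-factors on the left-hand side simultaneously; otherwise the cancellation will not collapse to the desired one-step identity.
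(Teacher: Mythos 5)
Your proof is correct and follows essentially the same route as the paper: both specialize the coefficient relation \eqref{YBEquation} at $i=D(j)$, use $\sigma_{D(j)}(j)=D(j)$ and $\tau_j(D(j))=j$ to cancel the common factors, and then identify $\tau_{\sigma_j(k)}(D(j))=D(\tau_k(j))$ via the relation $\tau_{\tau_k(j)}\tau_{\sigma_j(k)}=\tau_k\tau_j$ from \eqref{FormulaeYBE1}. No gaps.
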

\begin{proof}
Let $i=D(j)$, then $\sigma_i(j)=i$ and $\tau_j(i)=j$. From 
the formula \eqref{YBEquation}, we have 
$R_{i,j}=R_{\tau_{\sigma_j(k)}(i), \tau_k(j)}$.  
According to the formula \eqref{FormulaeYBE1}, we obtain
\begin{align*}
\tau_{\sigma_j(k)}(i)
&=\left(\tau_{\tau_k(j)}\right)^{-1}\tau_k\tau_j(i)
=\left(\tau_{\tau_k(j)}\right)^{-1}\tau_k(j)=D\tau_k(j).
\end{align*}
\end{proof}

\begin{theorem}\label{MainTheorem}
Let $(X, r)$ be a non-degenerate  involutive  solution of the Yang-Baxter equation, $|X|=m\in\Bbb Z^{\geq 2}$.
If $q=R_{D(i),i}\in\Bbb{G}_n$ for all $i\in X$, $n\geq 2$,  and 
$R_{i,j}R_{\sigma_i(j),\tau_j(i)}=1$ for all $i$, $j\in X$ with 
$i\neq D(j)$, then 
\[
\dim \mathfrak{B}(W_{X, r})=n^m, 
\]
and the relations of the Nichols algebra $\mathfrak{B}(W_{X, r})$ are given by
\begin{align}
w_iw_j-R_{i,j} w_{\sigma_i(j)}w_{\tau_j(i)}&=0,\quad D(j)\neq i,\label{IdealRelation1}\\
w_{D^{n-1}(i)}w_{D^{n-2}(i)}\cdots w_{D(i)} w_{i}&=0,\quad \forall i, j\in X.
\label{IdealRelation2}
\end{align}
\end{theorem}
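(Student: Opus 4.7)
The strategy is the standard three-step scheme: verify that the proposed relations lie in $\ker\mathfrak{S}$ (so that the algebra $A = T(W_{X,r})/\langle(\ref{IdealRelation1}),(\ref{IdealRelation2})\rangle$ admits a surjection onto $\mathfrak{B}(W_{X,r})$), match the dimension from above by a canonical basis for $A$, and from below by computing $\mathfrak{S}_d$ on each orbit.

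First, using involutivity $r^2=\id$ one has $c(w_{\sigma_i(j)}\otimes w_{\tau_j(i)}) = R_{\sigma_i(j),\tau_j(i)}\,w_i\otimes w_j$, so
\begin{align*}
\mathfrak{S}_2\bigl(w_iw_j - R_{i,j}w_{\sigma_i(j)}w_{\tau_j(i)}\bigr) = \bigl(1 - R_{i,j}R_{\sigma_i(j),\tau_j(i)}\bigr)\,w_iw_j,
\end{align*}
which vanishes when $D(j)\neq i$ by hypothesis. For (\ref{IdealRelation2}), each adjacent pair in $\Psi_n(i)$ has the form $(D(k),k)$, which is fixed by $r$ and has braiding scalar $R_{D(k),k}=q$ by hypothesis; hence $\mathfrak{S}_n(w_{\Psi_n(i)}) = (n)_q^!\,w_{\Psi_n(i)} = 0$ since $q\in\Bbb G_n$. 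As $\ker\mathfrak{S}$ is a biideal of $T(W_{X,r})$, this yields a graded surjection $A\twoheadrightarrow \mathfrak{B}(W_{X,r})$.

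Second, for $\dim A\leq n^m$, I combine Lemma \ref{AllElement} with (\ref{IdealRelation1}) and the exchange rule of Lemma \ref{ExchangRulem=1}. Any monomial $w_y$ lies in some orbit $\mathcal{O}(y)$ with associated partition $\lambda\in\mathcal{P}(d,m)$, and successive applications of (\ref{IdealRelation1}) together with the exchange rule reduce $w_y$, up to a nonzero scalar, to $w_x$ for a chosen $\lambda$-element $x\in\mathcal{O}(y)$. Moreover, the factorization $\Psi_{\lambda_i}(a)=\Psi_{\lambda_i-n}(D^n(a))\,\Psi_n(a)$ combined with (\ref{IdealRelation2}) forces $w_x=0$ whenever some $\lambda_i\geq n$. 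So $A$ is spanned by one representative per orbit whose associated partition satisfies $\lambda_1<n$, and a restricted version of Lemma \ref{MultinomialPartition} identifies the number of such orbits with the $m$-tuples $(e_1,\dots,e_m)\in\{0,\dots,n-1\}^m$, giving $n^m$ in total. Thus $\dim A\leq n^m$.

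Third, for the lower bound, fix a $\lambda$-element $x$ with $\lambda=(\lambda_1,\dots,\lambda_k)$ and $\lambda_1<n$. By Theorem \ref{MultinomialConjecture} the stabilizer $\mathfrak{G}(x,x)$ equals $\Bbb S_{\lambda_1}\times\cdots\times\Bbb S_{\lambda_k}$, and a simple reflection $s_\ell$ inside $\Bbb S_{\lambda_i}$ swaps two consecutive positions forming a $D$-pair inside $\Psi_{\lambda_i}(a_i)$, so $\mathcal{T}_{s_\ell}(w_x)=q\,w_x$. Extending across reduced expressions, $\mathcal{T}_\sigma(w_x)=q^{\ell(\sigma)}w_x$ for all $\sigma$ in the stabilizer, hence the coefficient of $w_x$ in $\mathfrak{S}_d(w_x)$ equals $(\lambda_1)_q^!\cdots(\lambda_k)_q^!$, which is nonzero since each $\lambda_i<n$ and $q\in\Bbb G_n$. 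Each $c_\ell$ preserves orbit subspaces, so does $\mathfrak{S}_d$; distinct orbits therefore yield elements of $\mathrm{Im}\,\mathfrak{S}_d$ lying in disjoint subspaces, hence linearly independent. This gives $\dim\mathfrak{B}(W_{X,r})\geq n^m$, and combining with Step 2 produces both the dimension formula and the presentation by (\ref{IdealRelation1})--(\ref{IdealRelation2}).

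The main technical point is Step 2: the reductions via (\ref{IdealRelation1}) and the exchange rule must be shown to yield a well-defined canonical representative per orbit (path-independence modulo the relations), and the restricted multinomial identity must be made fully explicit to match $n^m$ exactly.
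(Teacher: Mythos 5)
Your proof is correct, and its skeleton coincides with the paper's: check that the proposed relations lie in $\ker\mathfrak{S}$, use the orbit decomposition of $X^d$ together with \eqref{IdealRelation1} to see that each orbit contributes at most one dimension, and detect which orbits survive via the coefficient $(\lambda_1)_q^!\cdots(\lambda_k)_q^!$ of $w_x$ in $\mathfrak{S}_d(w_x)$. The one genuine point of divergence is the last step. The paper observes that the number of surviving orbits depends only on $m$ and $n$ and then evaluates it by specializing to the flip solution $r(i,j)=(j,i)$, where $W_{X,r}$ is of Cartan type $A_1\times\cdots\times A_1$ and the dimension $n^m$ is already known; you instead count the surviving orbits directly, using that $\sum_{\lambda}\mathrm{Perm}(\lambda)$ over partitions with all parts $<n$ enumerates the tuples $(e_1,\dots,e_m)\in\{0,\dots,n-1\}^m$. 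Your route is more self-contained (no appeal to the diagonal-type result), and your explicit sandwich $A\twoheadrightarrow\mathfrak{B}(W_{X,r})$ with $\dim A\le n^m\le\dim\mathfrak{B}(W_{X,r})$ makes the claim that \eqref{IdealRelation1}--\eqref{IdealRelation2} exhaust the relations cleaner than the paper's closing remark that no new relations arise. One small note: the path-independence issue you flag at the end is not actually needed — for the spanning (upper-bound) step any single chain of applications of \eqref{IdealRelation1} realizing a path inside the orbit writes $w_y$ as a scalar multiple of the chosen representative, which is all that spanning requires, and nonvanishing of that representative is supplied independently by your lower bound.
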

\begin{proof}
The relations \eqref{IdealRelation1} hold since $R_{i,j}R_{\sigma_i(j),\tau_j(i)}=1$ 
for all $i$, $j\in X$ with  $i\neq D(j)$. And the relations \eqref{IdealRelation2} 
hold because 
\[
\mathfrak{S}_n\left(w_{D^{n-1}(i)}w_{D^{n-2}(i)}\cdots w_{D(i)} w_{i}\right)
=(n)_q^!w_{D^{n-1}(i)}w_{D^{n-2}(i)}\cdots w_{D(i)} w_{i}=0.
\]

Let  $n^\prime \in \Bbb Z^+$, 
$\lambda=(\lambda_1, \lambda_2,\cdots, \lambda_k)\in\mathcal{P}(n^\prime,m)$, 
and  $x=\Psi_{\lambda_1}(a_1)\cdots \Psi_{\lambda_k}(a_k)$ be  a $\lambda$-element. 
We claim 
\begin{enumerate}
\item For any $y\in\mathcal O(x)$, $w_y\in\k^\times w_x$;
\item $w_x=0$ if and only if $\lambda_1\geq n$.
\end{enumerate}

For any 
$y\in\mathcal O(x)$, there exists an element 
$s=s_{i_t}s_{i_{t-1}}\cdots s_{i_2}s_{i_1}\in\Bbb S_{n^\prime}$ such that $s\cdot x=y$.
Denote 
\[
y_{k+1}=s_{i_{k+1}}\cdot y_k, \quad 0\leq k\leq t-1, \quad y_0=x,\quad y_t=y, 
\]
and $y_k=y_k[1]y_k[2]\cdots y_k[n^\prime]\in X^{n^\prime}$. 
According to \eqref{IdealRelation1},  we have 
\[
w_{y_k}-R_{y_k[i_{k+1}], y_k[i_{k+1}+1]}w_{y_{k+1}}=0,\quad 
0\leq k\leq t-1, 
\]
which implies that 
$
w_x=\prod_{k=0}^{t-1}R_{y_k[i_{k+1}], y_k[i_{k+1}+1]} w_y\in \k^\times w_y.
$

Since $\mathfrak{G}(x,x)=\Bbb S_{\lambda_1}\times \cdots \times \Bbb S_{\lambda_k}$, 
there exists a unique element $\theta_y\in\mathrm{shuffle}(\lambda_1,\cdots,\lambda_k)$
for any $y\in\mathcal{O}(x)$ such that $\theta_y\cdot x=y$.
This implies that there exists a parameter $\xi_y\in\k^\times$
such that $\mathcal{T}_{\theta_y}(w_x)=\xi_y w_y$. 
We have 
\[
\mathfrak{S}_{n^\prime}(w_x)
=(\lambda_1)_q^!(\lambda_2)_q^!\cdots (\lambda_k)_q^!
\sum_{y\in\mathcal O(x)} \xi_y w_y, 
\]
which implies that  $w_x=0$ if and only if  $\lambda_1\geq n$. 

According to  Lemma \ref{AllElement}, 
\[
T(W_{X,r})=\k\oplus \bigoplus_{n^\prime\in\Bbb Z^+, 
\lambda\in\mathcal{P}(n^\prime, m), y\in\mathcal{B}(\lambda)}\k w_y.  
\]
Now we see every orbit of $X^{n^\prime}$
contributes at most one dimension to the Nichols algebra and 
those that vanish correspond to partitions $\lambda=(\lambda_1,\cdots)$ with 
$\lambda_1\geq n$.

In case $r(i, j)=(j, i)$ for all $i, j\in X$,  $\dim \mathfrak{B}(W_{X, r})=n^m$ since it is of Cartan type 
$A_1\times \cdots \times A_1$($m$ copies).  Notice that the dimension of the Nichols algebra $\mathfrak{B}(W_{X, r})$ only relies on the parameters $m$ and $n$,  we have 
$\dim \mathfrak{B}(W_{X, r})=n^m$ for any non-degenerate involutive solution $(X, r)$. 

According to the above proof,  relations in part (1) are deduced from 
\eqref{IdealRelation1}. It is obvious that relations in part (2) are deduced from 
\eqref{IdealRelation2}. So there are no more new relations
in the Nichols algebra. 
\end{proof}

\begin{remark}
It is obvious  that $\mathfrak{B}(W_{X,r})$  is not of group type in general, 
see  Example \ref{NotGroupType}.
The theorem generalizes a result in \cite{Yuxing2020}, which associated  the Nichols algebras 
 of squared dimension with  Pascal's triangle. The Nichols algebras of squared dimension
 appeared first in Andruskiewitsch and Giraldi's work \cite{Andruskiewitsch2018}, and they 
 have realizations in the Yetter-Drinfeld categories
 of the Suzuki Hopf algebras $A_{Nn}^{\mu\lambda}$, see 
\cite{Shi2020even} \cite{Shi2020odd} \cite{Shi2019}.
\end{remark}

\begin{corollary}
Let $q=R_{D(i),i}$ for all $i\in X$, $q\notin\Bbb{G}_n$ for all $n\geq 2$,  and 
\[
R_{i,j}R_{\sigma_i(j),\tau_j(i)}=1,\quad  \forall i, j\in X,\quad  
D(j)\neq i, 
\]
then 
the Gelfand–Kirillov dimension of 
$\mathfrak{B}(W_{X, r})$ is  $m$.
\end{corollary}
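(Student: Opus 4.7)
The plan is to follow the same template as the proof of Theorem \ref{MainTheorem}, replacing only the step where the root-of-unity condition on $q$ is invoked. First, under the hypothesis $R_{i,j}R_{\sigma_i(j),\tau_j(i)}=1$ for all $i,j\in X$ with $i\neq D(j)$, the exchange relations \eqref{IdealRelation1} hold in $\mathfrak{B}(W_{X,r})$ by the same argument as before. Consequently, for any partition $\lambda\in\mathcal{P}(n',m)$ and any $\lambda$-element $x\in X^{n'}$, every basis vector $w_y$ with $y\in\mathcal{O}(x)$ equals a nonzero scalar multiple of $w_x$ modulo the ideal generated by \eqref{IdealRelation1}, exactly as in the earlier proof.

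The decisive difference is the behavior of the symmetrizer. The computation in Theorem \ref{MainTheorem} still yields
$$\mathfrak{S}_{n'}(w_x)=(\lambda_1)_q^!(\lambda_2)_q^!\cdots(\lambda_k)_q^!\sum_{y\in\mathcal{O}(x)}\xi_y w_y,$$
but now, since $q\notin\Bbb{G}_n$ for every $n\geq 2$, each factor $(\lambda_i)_q^!$ is nonzero, so $\mathfrak{S}_{n'}(w_x)\neq 0$. Hence $w_x$ does \emph{not} vanish in $\mathfrak{B}(W_{X,r})$ for any $\lambda$, and there is no truncation analogue of \eqref{IdealRelation2}. Combining this with Lemma \ref{AllElement} and Theorem \ref{MultinomialConjecture}, each orbit in $X^{n'}$ contributes exactly one dimension to $\mathfrak{B}^{n'}(W_{X,r})$, whence
$$\dim\mathfrak{B}^{n'}(W_{X,r})=\binom{n'+m-1}{m-1}.$$

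Finally, the Hilbert function $n'\mapsto\binom{n'+m-1}{m-1}$ is a polynomial in $n'$ of degree $m-1$, so the sum $\sum_{i=0}^{n'}\dim\mathfrak{B}^i(W_{X,r})$ grows like a polynomial of degree $m$, yielding $\mathrm{GKdim}\,\mathfrak{B}(W_{X,r})=m$ by the standard definition of the Gelfand--Kirillov dimension of a graded algebra. The only subtle point to watch is that no additional relations appear beyond \eqref{IdealRelation1}; this is precisely what the non-vanishing of the quantum factorials in the second step controls, confirming that the orbit-by-orbit dimension count is tight.
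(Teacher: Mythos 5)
Your proposal is correct and follows essentially the same route as the paper: the paper's own proof simply notes that $w_y\in\k^\times w_x$ for all $y\in\mathcal{O}(x)$, so each orbit contributes at most one dimension, and then evaluates $\lim_{n\to\infty}\log_n\sum_{k=0}^n\binom{k+m-1}{m-1}=m$. Your version is in fact slightly more complete, since you make explicit the non-vanishing of $(\lambda_i)_q^!$ (hence of $\mathfrak{S}_{n'}(w_x)$), which is what upgrades the orbit count from an upper bound to the exact Hilbert function $\binom{n'+m-1}{m-1}$ needed for the equality $\mathrm{GKdim}=m$.
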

\begin{proof}
Since $w_y\in \k^{\times} w_x$ for any $y\in\mathcal{O}(x)\subseteq X^k$, 
$k\in\Bbb Z^+$, 
we have 
\begin{align*}
\mathrm{GKdim}  \mathfrak{B}(W_{X, r})
&=\lim_{n\to\infty}\log_n\sum_{k=0}^n{k+m-1\choose m-1}=m.
\end{align*}
\end{proof}
\begin{remark}
The result $\mathrm{GKdim}  \mathfrak{B}(W_{X,r})=2$ with $X=\Bbb Z_2$, 
$r(i,j)=(j-1, i+1)\in \Bbb Z_2\times \Bbb Z_2$, 
 was obtained first by 
Andruskiewitsch and Giraldi  \cite{Andruskiewitsch2018}.
Besides, the Nichols algebras of  Gelfand–Kirillov dimension $m$
were also studied by Gateva-Ivanova under different names 
\cite{zbMATH02123087}
\cite{zbMATH07405634}.
\end{remark}


\begin{theorem}
Let $(X, r)$ be a non-degenerate  involutive  solution of the Yang-Baxter equation, $|X|=m\in\Bbb Z^{\geq 2}$.
Suppose $(X, r)$ is decomposable as 
\[
X=X_1\cup X_2\cup\cdots \cup X_t, 
\]
$q_k=R_{D(i_k),i_k}$ for all $i_k\in X_k$ and $1\leq k\leq t$.
If $q_k\in \Bbb G_{n_k}$ for $1\leq k\leq t$ and $n_k\geq 2$, 
$R_{i,j}R_{\sigma_i(j),\tau_j(i)}=1$ for all $i$, $j\in X$ with 
$i\neq D(j)$, then 
$$\dim\mathfrak{B}(W_{X,r})=n_1^{|X_1|}n_2^{|X_2|}\cdots n_t^{|X_t|}.$$
\end{theorem}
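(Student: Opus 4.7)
The plan is to reduce the statement to the indecomposable case already handled by Theorem~\ref{MainTheorem}, using the tensor product formula for Nichols algebras given in Lemma~\ref{TensorNicholsAlg}. Since $(X,r)$ is decomposable as $X = X_1 \cup \cdots \cup X_t$, the braided vector space splits as a direct sum
\[
W_{X,r} = \bigoplus_{k=1}^t W_{X_k, r|_{X_k\times X_k}},
\]
where each summand is itself a braided vector space associated to a non-degenerate involutive solution $(X_k, r|_{X_k\times X_k})$. This works because decomposability means $r(X_k\times X_k)\subseteq X_k\times X_k$, so for $i\in X_k$ the maps $\sigma_i$ and $\tau_i$ preserve each block $X_k$.

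Next I would verify the hypothesis of Lemma~\ref{TensorNicholsAlg} for any two summands $M_1 = W_{X_k}$ and $M_2 = W_{X_l}$ with $k\neq l$. For $i\in X_k$ and $j\in X_l$, decomposability gives $\sigma_i(j)\in X_l$ and $\tau_j(i)\in X_k$, and involutivity of $r$ gives $\sigma_{\sigma_i(j)}(\tau_j(i)) = i$ and $\tau_{\tau_j(i)}(\sigma_i(j)) = j$. A direct computation then yields
\[
c^2(w_i\otimes w_j) = R_{i,j}\,R_{\sigma_i(j),\tau_j(i)}\; w_i\otimes w_j.
\]
Because $D$ preserves each block (as $D(j) = \tau_j^{-1}(j)\in X_l$), we have $D(j)\neq i$ whenever $k\neq l$, so the hypothesis $R_{i,j}R_{\sigma_i(j),\tau_j(i)} = 1$ applies. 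Hence $c_{W_{X_k},W_{X_l}}c_{W_{X_l},W_{X_k}} = \mathrm{id}$.

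Iterating Lemma~\ref{TensorNicholsAlg} then gives
\[
\mathfrak{B}(W_{X,r}) \cong \mathfrak{B}(W_{X_1})\otimes \mathfrak{B}(W_{X_2})\otimes \cdots \otimes \mathfrak{B}(W_{X_t})
\]
as graded vector spaces. Each restricted solution $(X_k, r|_{X_k\times X_k})$ inherits the hypotheses of Theorem~\ref{MainTheorem} with parameters $q_k\in\Bbb{G}_{n_k}$ and the validity of $R_{i,j}R_{\sigma_i(j),\tau_j(i)}=1$ for $i\neq D(j)$ within $X_k$, so $\dim\mathfrak{B}(W_{X_k}) = n_k^{|X_k|}$. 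Multiplying the dimensions yields the claimed formula.

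The only nontrivial step is the verification that the cross braiding squares to the identity; the rest is essentially bookkeeping. The key point is that the hypothesis $R_{i,j}R_{\sigma_i(j),\tau_j(i)}=1$ for $i\neq D(j)$ is exactly what is needed across different blocks, since $D$ acts within each block. No new estimates or combinatorial arguments beyond those already established in the previous subsection are required.
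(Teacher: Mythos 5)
Your proof follows exactly the paper's argument: split $W_{X,r}=\bigoplus_k W_{X_k}$, apply Theorem~\ref{MainTheorem} to each block, and combine via Lemma~\ref{TensorNicholsAlg}. The only difference is that you explicitly verify the hypothesis $c_{M_k,M_l}c_{M_l,M_k}=\mathrm{id}$ for distinct blocks (using that $\sigma_i$, $\tau_j$, and hence $D$ preserve each block, so $i\neq D(j)$ across blocks), a check the paper leaves implicit; this is a correct and welcome addition, not a different route.
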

\begin{proof}
As for $1\leq k\leq t$, $M_k=\bigoplus_{a\in X_k}\k w_a$ is a braided vector subspace of $W_{X,r}$.
According to Theorem \ref{MainTheorem},  we have $\dim \mathfrak{B}(M_k)=n_k^{|X_k|}$. 
The proof is finished by Lemma \ref{TensorNicholsAlg}.
\end{proof}
\begin{remark}
If some elements of $\{q_k\}_{1\leq k\leq t}$ are not  roots of unity, then 
\[
0<\rm{GKdim}\, \mathfrak{B}(W_{X,r})\leq m.
\]
In fact, $\rm{GKdim}\, \mathfrak{B}(W_{X,r})$ can be calculated by  Lemma \ref{TensorNicholsAlg}. 
\end{remark}

\begin{example}\cite{Andruskiewitsch2018} \cite{Shi2020even} \cite{Shi2020odd}
\label{NotGroupType}
Let $(\Bbb Z_2, r)$ with 
 $r(i,j)=(j-1,i+1)\in\Bbb Z_2\times \Bbb Z_2$ be a permutation solution. 
$W_{\Bbb Z_2,r}=\bigoplus_{i\in\Bbb{Z}_2}\k w_i$ 
is a braided vector space defined as
\begin{align*}
c(w_ 0 \otimes w_ 0 )&= a w_ 1 \otimes w_ 1 ,&
c(w_ 0 \otimes w_ 1 )&=q w_ 0 \otimes w_ 1 ,\\
c(w_ 1 \otimes w_ 0 )&= q w_ 1 \otimes w_ 0 ,&
c(w_ 1 \otimes w_ 1 )&= e w_ 0 \otimes w_ 0 ,
\end{align*}
where $aqe\in\k^\times$.  If $ae=q^2$, then 
$\mathfrak{B}(W_{\Bbb Z_2,r})$ is of diagonal type and
\[
\dim \mathfrak{B}(W_{\Bbb Z_2,r})
=\left\{\begin{array}{ll}
4, & q=-1,\quad (\text{Cartan type $A_1\times A_1$}),\\
27, & q^3=1\neq q,\quad  (\text{Cartan type $A_2$}),\\
\infty, & \text{otherwise}.
\end{array}\right.
\]
Since  the braiding of $\mathfrak{B}(W_{\Bbb Z_2,r})$ is rank $2$, 
$\mathfrak{B}(W_{\Bbb Z_2,r})$ is of  group type if and only if   it is of diagonal type. 
If $q^2\neq ae$, $\mathfrak{B}(W_{\Bbb Z_2,r})$ is obviously not of group type,
\begin{align}\label{fomulaeVabe}
\dim\mathfrak{B}(W_{\Bbb Z_2,r})
=\left\{\begin{array}{ll}
4n, &q=-1, ae\in\Bbb{G}_n,\\
n^2, &ae=1, q\in\Bbb{G}_n\,\,\text{for}\,\,n\geq 2,\\
\infty, & q^2=(ae)^{-1},  q\in\Bbb{G}_{n}\,\, \text{for}\,\, n\geq 3,\\
\infty,   & q\notin \Bbb{G}_n \,\,\text{for}\,\,n\geq 2,\\
\text{unknown}, & otherwise.
\end{array}\right.
\end{align}
\end{example}

\begin{example}
Let $(\Bbb Z_3, r)$ with 
 $r(i,j)=(j-1,i+1)\in\Bbb Z_3\times \Bbb Z_3$ be a permutation solution. 
Then $W_{\Bbb Z_3, r}=\bigoplus_{i\in\Bbb{Z}_3}\k w_i$  
is a braided vector space defined as 
\begin{align*}
c(w_ 0 \otimes w_ 0 )&= a w_ 2 \otimes w_ 1 ,&
c(w_ 0 \otimes w_ 1 )&=q w_ 0 \otimes w_ 1 ,&
c(w_ 0 \otimes w_ 2 )&= d w_ 1 \otimes w_ 1 ,\\
c(w_ 1 \otimes w_ 0 )&= e w_ 2 \otimes w_ 2 ,&
c(w_ 1 \otimes w_ 1 )&= f w_ 0 \otimes w_ 2 ,&
c(w_ 1 \otimes w_ 2 )&=q w_ 1 \otimes w_ 2 ,\\
c(w_ 2 \otimes w_ 0 )&=q w_ 2 \otimes w_ 0 ,&
c(w_ 2 \otimes w_ 1 )&= \frac{df}{a} w_ 0 \otimes w_ 0 ,&
c(w_ 2 \otimes w_ 2 )&=\frac{df}{e} w_ 1 \otimes w_ 0, 
\end{align*}
where $adefq\in\k^\times$. 
If $df=1$, $q\in\Bbb{G}_n$ for $n\geq 2$, then $\dim\mathfrak{B}(W_{\Bbb Z_3, r})=n^3$ and 
\begin{align*}
w_0^2-aw_2w_1=0,\quad w_0w_2-d w_1^2=0,\quad  w_1w_0-ew_2^2=0, \\
w_iw_{i+1}\cdots w_{i+n-1}=0,\quad \forall i\in\Bbb{Z}_3.
\end{align*}
\end{example}

\begin{example}
Let $(\Bbb Z_4, r)$ with 
 $r(i,j)=(j-1,i+1)\in\Bbb Z_4\times \Bbb Z_4$ be a permutation solution. 
Then $W_{\Bbb Z_4, r}=\bigoplus_{i\in\Bbb{Z}_4}\k w_i$ 
is a braided vector space defined as 
\begin{align*}
c(w_0 \otimes w_0 )&= x_1 w_3 \otimes w_1 ,&
c(w_0 \otimes w_1 )&= q w_0 \otimes w_1 ,\\
c(w_0 \otimes w_2 )&= x_2 w_1 \otimes w_1 ,&
c(w_0 \otimes w_3 )&= x_3 w_2 \otimes w_1 ,\\
c(w_1 \otimes w_0 )&= x_4 w_3 \otimes w_2 ,&
c(w_1 \otimes w_1 )&= x_5 w_0 \otimes w_2 ,\\
c(w_1 \otimes w_2 )&= q w_1 \otimes w_2 ,&
c(w_1 \otimes w_3 )&= x_6 w_2 \otimes w_2 ,\\
c(w_2 \otimes w_0 )&= \frac{x_2x_4}{x_3} w_3 \otimes w_3 ,&
c(w_2 \otimes w_1 )&= \frac{x_2x_4x_5}{x_1x_6} w_0 \otimes w_3 ,\\
c(w_2 \otimes w_2 )&= \frac{x_2x_5}{x_6} w_1 \otimes w_3 ,&
c(w_2 \otimes w_3 )&= q w_2 \otimes w_3 ,\\
c(w_3 \otimes w_0 )&= q w_3 \otimes w_0 ,&
c(w_3 \otimes w_1 )&= \frac{x_2x_5}{x_1} w_0 \otimes w_0 ,\\
c(w_3 \otimes w_2 )&= \frac{x_2x_3x_5}{x_1x_6} w_1 \otimes w_0 ,&
c(w_3 \otimes w_3 )&= \frac{x_3x_5}{x_4} w_2 \otimes w_0,
\end{align*}
where $qx_1x_2x_3x_4x_5x_6\in \k^\times$. 
If $x_2x_5=1$, $x_1x_6=x_2x_3x_4x_5$,  $q\in\Bbb{G}_n$ for $n\geq 2$, 
then 
\begin{align*}
w_0^2-x_1w_3w_1=0,\quad  w_0w_2-x_2w_1^2=0,\quad w_0w_3-x_3w_2w_1=0,\\
w_1w_0-x_4 w_3w_2=0\quad w_1w_3-x_6w_2^2=0,\quad 
w_2w_0-\frac{x_2x_4}{x_3}w_3^2=0,\\
w_iw_{i+1}\cdots w_{i+n-1}=0,\quad \forall i\in\Bbb{Z}_4.
\end{align*}
In this case, $\dim \mathfrak{B}(W_{4,\tau})=n^4$.
\end{example}

\begin{example}
Let $(\Bbb Z_4, r)$ with 
 $r(i,j)=(j-2,i+2)\in\Bbb Z_4\times \Bbb Z_4$ be a permutation solution. 
Then $W_{\Bbb Z_4, r}=\bigoplus_{i\in\Bbb{Z}_4}\k w_i$  
is a braided vector space defined as
\begin{align*}
c(w_0 \otimes w_0 )&= x_1 w_2 \otimes w_2 ,&
c(w_0 \otimes w_1 )&= x_2 w_3 \otimes w_2 ,\\
c(w_0 \otimes w_2 )&= q_1 w_0 \otimes w_2 ,&
c(w_0 \otimes w_3 )&= x_3 w_1 \otimes w_2 ,\\
c(w_1 \otimes w_0 )&= x_4 w_2 \otimes w_3 ,&
c(w_1 \otimes w_1 )&= x_5 w_3 \otimes w_3 ,\\
c(w_1 \otimes w_2 )&= x_6 w_0 \otimes w_3 ,&
c(w_1 \otimes w_3 )&= q_2 w_1 \otimes w_3 ,\\
c(w_2 \otimes w_0 )&= q_1 w_2 \otimes w_0 ,&
c(w_2 \otimes w_1 )&= x_7 w_3 \otimes w_0 ,\\
c(w_2 \otimes w_2 )&= \frac{x_1x_7x_9}{x_2x_3} w_0 \otimes w_0 ,&
c(w_2 \otimes w_3 )&= x_9 w_1 \otimes w_0 ,\\
c(w_3 \otimes w_0 )&= \frac{x_3x_6}{x_7} w_2 \otimes w_1 ,&
c(w_3 \otimes w_1 )&= q_2 w_3 \otimes w_1 ,\\
c(w_3 \otimes w_2 )&= \frac{x_4x_9}{x_2} w_0 \otimes w_1 ,&
c(w_3 \otimes w_3 )&= \frac{x_3x_5x_9}{x_2x_7} w_1 \otimes w_1 ,
\end{align*}
where $q_1q_2x_1x_2x_3x_4x_5x_6x_7x_9\in \k^\times$.
If $x_1^2x_7x_9=x_2x_3$, $x_4x_9=1$, $x_3x_6=1$, $x_2x_7=x_3x_5^2x_9$, 
$q_i\in\Bbb{G}_{n_i}$ for $n_i\geq 2$, $i\in\{1,2\}$, then $\dim \mathfrak{B}(W_{4,\tau})=n_1^2n_2^2$ and 
\begin{align*}
w_0^2-x_1w_2^2=0,\quad  w_0w_1-x_2w_3w_2=0,\quad
w_0w_3-x_3w_1w_2=0,\\
w_1w_0-x_4w_2w_3=0,\quad w_2w_1-x_7w_3w_0=0,\quad
w_1^2-x_5w_3^2=0,\\
w_iw_{i+2}\cdots w_{i+2(n_1-1)}=0,\quad 
w_jw_{j+2}\cdots w_{j+2(n_2-1)}=0,\quad i\in\{0, 2\},\quad  j\in\{1, 3\}.
\end{align*}
\end{example}


\begin{example}
Let $X=\{1, 2, 3, 4\}$ and $r(i, j)=(\sigma_i(j), \tau_j(i))$, where 
\begin{align*}
\sigma_1&=(3\,4),&\sigma_2&=(1\,3\,2\,4),
& \sigma_3&=(1\,4\,2\,3), &\sigma_4&=(1\,2),\\
\tau_1&=(2\,4),& \tau_2&=(1\,4\,3\,2),& \tau_3&=(1\,2\,3\,4),& \tau_4&=(1\,3).
\end{align*}
Let $(x_2x_4x_6)^2=(x_5^2x_8)^2$, $qx_2x_3x_4x_5x_6x_8\in\k^\times$, then 
$W_{X, r}=\bigoplus_{i\in X}\k w_i$  
is a braided vector space defined as 
\begin{align*}
c(w_ 1 \otimes w_ 1 )&= q w_ 1 \otimes w_ 1 ,&
c(w_ 1 \otimes w_ 2 )&= x_2 w_ 2 \otimes w_ 4 ,\\
c(w_ 1 \otimes w_ 3 )&= x_3 w_ 4 \otimes w_ 2 ,&
c(w_ 1 \otimes w_ 4 )&= x_4 w_ 3 \otimes w_ 3 ,\\
c(w_ 2 \otimes w_ 1 )&= x_5 w_ 3 \otimes w_ 4 ,&
c(w_ 2 \otimes w_ 2 )&= x_6 w_ 4 \otimes w_ 1 ,\\
c(w_ 2 \otimes w_ 3 )&= q w_ 2 \otimes w_ 3 ,&
c(w_ 2 \otimes w_ 4 )&= x_8 w_ 1 \otimes w_ 2 ,\\
c(w_ 3 \otimes w_ 1 )&= x_5^2x_8(x_4x_6)^{-1} w_ 4 \otimes w_ 3 ,&
c(w_ 3 \otimes w_ 2 )&= q w_ 3 \otimes w_ 2 ,\\
c(w_ 3 \otimes w_ 3 )&= x_3x_5x_8(x_2x_4)^{-1} w_ 1 \otimes w_ 4 ,&
c(w_ 3 \otimes w_ 4 )&= x_3x_8x_2^{-1} w_ 2 \otimes w_ 1 ,\\
c(w_ 4 \otimes w_ 1 )&= x_3x_5x_8(x_2x_6)^{-1} w_ 2 \otimes w_ 2 ,&
c(w_ 4 \otimes w_ 2 )&= x_5x_8x_2^{-1} w_ 1 \otimes w_ 3 ,\\
c(w_ 4 \otimes w_ 3 )&= x_2x_4x_6x_5^{-2} w_ 3 \otimes w_ 1 ,&
c(w_ 4 \otimes w_ 4 )&= q w_ 4 \otimes w_ 4.
\end{align*}
If $x_2x_8=1$,  $x_2=x_3x_5x_8$, $q\in\Bbb G_n$ for $n\geq 2$, then 
$\dim \mathfrak{B}(W_{X,r})=n^4$ and 
\begin{align*}
w_1w_2-x_2 w_2w_4=0,\quad w_1w_3-x_3 w_4w_2=0,\quad 
w_1w_4-x_4 w_3^2=0,\\
w_2w_1-x_5 w_3w_4=0,\quad 
w_2^2-x_6 w_4w_1=0,\quad
w_3w_1-\frac{x_5^2}{x_2x_4x_6}w_4w_3=0,\\
w_{D^{n-1}(i)}w_{D^{n-2}(i)}\cdots w_{D(i)}w_i=0,\quad \forall i\in X, D=(2\, 3)\in\Bbb S_4.
\end{align*}
\end{example}

\section{Further research}
\begin{question}
How to classify all finite dimensional Nichols algebras associated to non-degenerate 
involutive solutions of the Yang-Baxter equation?
\end{question}

We say two Nichols algebras 
$\mathfrak{B}(V_1)$  and $\mathfrak{B}(V_2)$ are
Morita equivalent if   there exist two Hopf algebras $H_1$ and $H_2$ such that 
$\mathcal{F}: {}_{H_1}^{H_1}\mathcal{YD}\rightarrow {}_{H_2}^{H_2}\mathcal{YD}$
is isomorphic as braided tensor categories and $\mathcal{F}(V_1)=V_2$.
\begin{question}
In the sense of Morita equivalence, would it be possible to classify those finite dimensional Nichols algebras obtained in the 
paper? 
In particular, which of them are Morita equivalent to Nichols algebras of group type?
\end{question}
\begin{remark}
 The Nichols algebra $\mathfrak{B}(W_{\Bbb Z_2, r})$ of squared dimension 
 can be realized in the Yetter-Drinfeld categories
 of the Suzuki Hopf algebras $A_{Nn}^{\mu\lambda}$, 
 see Example \ref{NotGroupType}.
Masuoka proved that  $A_{1n}^{++}$ is isomorphic to a $2$-cocycle deformation of $\k^{D_{4n}}$ \cite{MR1800713}.
A negative information is that the Suzuki Hopf algebras $A_{Nn}^{\mu\lambda}$ are 
not categorically Morita equivalent to group algebras in general, 
for example  $A_{12}^{+-}$ \cite[Section 5.2]{MR2386730}.   
\end{remark}

\begin{question}
Realize those  finite dimensional Nichols algebras obtained in the 
paper in categories of Yetter-Drinfeld modules and use them to classify finite dimensional 
Hopf algebras according to the lifting method \cite{andruskiewitsch2001pointed}.
\end{question}

Let $(X, r)$ be a non-degenerate solution of the Yang-Baxter equation and $H_{X,r}$ be
the group generated by $r$. Since $X$ is a finite non-empty set, we have 
$r^n=\rm{id}$ for some $n\in\Bbb Z^+$. The group $H_{X,r}$ acts on 
$X\times X$ and the orbits of this action are 
\[
\mathcal{O}(i,j)=\left\{r^k(i,j)\,\big|\, k\in \Bbb Z, i, j\in X\right\}.
\]
The set $X\times X$ is the disjoint union of the orbits under action of $H_{X,r}$. 
Let 
\[
l_n=\#\{\mathcal{O}(i,j):\,\, \mathcal{O}(i,j)\,\,\text{has $n$ elements}\}.
\]
Then $l_1+2l_2+3l_3+\cdots=|X|^2$. 
In case that $X$ is a rack, orbits and sizes of orbits of $X\times X$ were used to study 
Nichols algebras of group type with many quadratic relations \cite{grana2011nichols}. 
Denote 
\[
\Phi_{X,r}=(l_1,l_2,l_3,\cdots).
\]
\begin{conjecture}
Let $(X, r)$ be a non-degenerate indecomposable  solution of the Yang-Baxter equation
and 
\[
\dim \mathfrak{B}(W_{X,r})=m\quad (\text{or}\,\, \rm{GKdim}\,\mathfrak{B}(W_{X,r})=m).
\]
Suppose $(Y, r^\prime)$ is any non-degenerate solution of the Yang-Baxter equation 
with $$\Phi_{X,r}=\Phi_{Y,r^\prime},$$ 
then $\dim \mathfrak{B}(W_{Y,r^\prime})=m$  
(or $\rm{GKdim}\,\mathfrak{B}(W_{Y,r^\prime})=m$)
under some given conditions. 
\end{conjecture}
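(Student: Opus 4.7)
The plan is to lift the orbit-based machinery of Theorem \ref{MainTheorem} from the involutive setting to arbitrary non-degenerate solutions, using the $\langle r\rangle$-orbit decomposition of $X\times X$ (which is exactly what $\Phi_{X,r}$ records) as the fundamental combinatorial invariant. The first step is, for each orbit $\mathcal{O}\subseteq X\times X$ of size $k$, to introduce the \emph{orbit scalar} $\mu_\mathcal{O}=\prod_{s=0}^{k-1}R_{r^s(i,j)}$, which is independent of the chosen representative by cyclicity. Using the Yang--Baxter equation \eqref{YBEquation}, I would verify that the family $(R_{i,j})$ is determined up to gauge (rescaling the basis $(w_i)_{i\in X}$) by the orbit scalars $(\mu_\mathcal{O})$, so that the $R$-data reduces to one scalar per orbit.

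Next, I would identify the "given conditions" under which the conjecture should hold, taking Theorem \ref{MainTheorem} as a guide. The natural candidate is the orbit-wise analog: $\mu_\mathcal{O}\in\Bbb{G}_n$ for every fixed orbit (the non-involutive replacement for $q=R_{D(i),i}$), and $\mu_\mathcal{O}=1$ for every orbit of size $\geq 2$ (the replacement for $R_{i,j}R_{\sigma_i(j),\tau_j(i)}=1$). Under these hypotheses, one should generalize the exchange rule of Lemma \ref{ExchangRulem=1} to $r$ of arbitrary order by replacing the diagonal $D$ with the appropriate iterate of $r$ along each orbit, and produce a straightening algorithm on $W_{X,r}^{\otimes n'}$ whose terminal monomials are indexed by the $\Bbb{S}_{n'}$-orbits in $X^{n'}$ and whose vanishing test is again controlled by the multinomial identity of Lemma \ref{MultinomialPartition}.

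Once this normal form is in place, the argument of Theorem \ref{MainTheorem} should go through essentially verbatim: each $\Bbb{S}_{n'}$-orbit in $X^{n'}$ contributes at most one dimension to $\mathfrak{B}(W_{X,r})$, and the contribution vanishes precisely when the associated block length is at least the order of the relevant $\mu_\mathcal{O}$. Summing over all orbits shows that $\dim\mathfrak{B}(W_{X,r})$, respectively $\mathrm{GKdim}\,\mathfrak{B}(W_{X,r})$, depends only on how many orbits of each size appear, i.e.\ on $\Phi_{X,r}$. For the decomposable reduction one applies Lemma \ref{TensorNicholsAlg} component by component.

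The main obstacle is the analog of Lemma \ref{AllElement} for non-involutive $r$: the exchange-rule calculus used there relies crucially on $r^2=\id$, which collapses many potential orbits and controls the length of the canonical form. For $r$ of higher order the straightening algorithm can produce longer compositions with nontrivial monodromy along the $r$-orbits, so proving termination in a canonical $\lambda$-element is the crux of the argument. A secondary difficulty is exploiting indecomposability of $(X,r)$ to show that all fixed-orbit scalars $\mu_\mathcal{O}$ must take a common value (otherwise the root-of-unity condition and the dimension count are ambiguous); this appears to require genuine input from the structure theory of indecomposable non-degenerate solutions.
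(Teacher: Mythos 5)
The statement you are addressing is presented in the paper as a \emph{conjecture}: the paper offers no proof of it, only the observation (in the remark that follows it) that it holds in the non-degenerate indecomposable \emph{involutive} case as a consequence of Theorem \ref{MainTheorem}, together with computational evidence drawn from the enumeration of Akg\"{u}n, Mereb and Vendramin. So there is no proof in the paper to compare yours against, and your proposal has to be judged on its own terms as an attempted resolution of an open problem. On those terms it is a sensible research program, but it is not a proof, and you yourself flag the two places where it breaks down.

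The decisive gap is the one you call the ``analog of Lemma \ref{AllElement}.'' For non-involutive $r$ the operators $s_k\cdot(-)$ on $X^n$ satisfy the braid relations but not $s_k^2=\mathrm{id}$, so they generate an action of the braid group $B_n$, not of $\Bbb{S}_n$. Everything in Section 3 downstream of that point --- the identification of stabilizers with Young subgroups $\Bbb{S}_{\lambda_1}\times\cdots\times\Bbb{S}_{\lambda_k}$, the orbit count via Lemma \ref{MultinomialPartition} and Theorem \ref{MultinomialConjecture}, and the factorization $\mathfrak{S}_{n^\prime}(w_x)=(\lambda_1)_q^!\cdots(\lambda_k)_q^!\sum_{y}\xi_y w_y$ over shuffles --- uses $r^2=\mathrm{id}$ essentially, and without it there is no canonical $\lambda$-element normal form for the straightening to terminate at. Asserting that the argument then ``goes through essentially verbatim'' is precisely the missing step. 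Two further claims are left unsupported: that the constraint \eqref{YBEquation} forces the family $(R_{i,j})$ to be determined, up to rescaling of the basis $(w_i)_{i\in X}$, by the orbit scalars $\mu_{\mathcal{O}}$ (a nontrivial cohomological statement you would have to prove), and that indecomposability forces all fixed-orbit scalars to take a common value. Until the $B_n$-orbit structure on $X^n$ is controlled, the statement remains open --- which is exactly why the paper records it as a conjecture rather than a theorem.
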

\begin{remark}
If $(X, r)$ is a non-degenerate indecomposable involutive  solution with 
$|X|=m$, 
then the conjecture holds for $\mathfrak{B}(W_{X,r})$ with dimension $n^m$ or 
the Gelfand–Kirillov dimension
$m$ by results of the paper. 
According to the conjecture, we find $8$ classes of $72$-dimensional Nichols algebras
which are presented in below. 
\end{remark}

Due to Akg\"{u}n, Mereb and Vendramin's enumeration 
of set-theoretical solutions to the Yang-Baxter equation \cite{zbMATH07506857},  many finite dimensional Nichols algebras are going to be obtained. 
Examples show that  there are analogue Nichols algebras of dimension $12$, $72$, $5184$, $1280$, $576$, 
$326592$ and $8294400$, which are listed in \cite[Table 9.1]{Heckenberger2015}. 
We are interested in the following question. 
\begin{question}
Would it be possible to describe those Nichols algebras listed in \cite[Table 9.1]{Heckenberger2015}  with some combinatoric approach and make a generalization?
For example, we obtain the following $8$ classes of $72$-dimensional Nichols algebras according to  the enumeration 
in \cite{zbMATH07506857}, under the assistance of the software GAP. 
Is it possible to  describe  the $8$ classes of $72$-dimensional Nichols algebras 
through some combinatoric approach in a unified way?
\end{question}
\begin{example}\cite{Grana2000}
 Let $qx_2x_3x_7x_8\in\k^\times$ and $(x_3x_8)^2=q^4$. 
 $W_1=\bigoplus_{i=1}^4\k w_i$ is a braided vector space,  with the braiding given by
 \begin{align*}
 c(w_ 1 \otimes w_ 1 )&= q w_ 1 \otimes w_ 1 ,&
c(w_ 1 \otimes w_ 2 )&= x_2 w_ 3 \otimes w_ 1 ,\\
c(w_ 1 \otimes w_ 3 )&= x_3 w_ 4 \otimes w_ 1 ,&
c(w_ 1 \otimes w_ 4 )&= q^3(x_2x_3)^{-1} w_ 2 \otimes w_ 1 ,\\
c(w_ 2 \otimes w_ 1 )&= q^3(x_7x_8)^{-1} w_ 4 \otimes w_ 2 ,&
c(w_ 2 \otimes w_ 2 )&= q w_ 2 \otimes w_ 2 ,\\
c(w_ 2 \otimes w_ 3 )&= x_7 w_ 1 \otimes w_ 2 ,&
c(w_ 2 \otimes w_ 4 )&= x_8 w_ 3 \otimes w_ 2 ,\\
c(w_ 3 \otimes w_ 1 )&= q^5(x_2x_3x_7x_8)^{-1} w_ 2 \otimes w_ 3 ,&
c(w_ 3 \otimes w_ 2 )&= qx_2x_8^{-1} w_ 4 \otimes w_ 3 ,\\
c(w_ 3 \otimes w_ 3 )&= q w_ 3 \otimes w_ 3 ,&
c(w_ 3 \otimes w_ 4 )&= qx_7x_3^{-1} w_ 1 \otimes w_ 3 ,\\
c(w_ 4 \otimes w_ 1 )&= q^4(x_3x_7x_8)^{-1} w_ 3 \otimes w_ 4 ,&
c(w_ 4 \otimes w_ 2 )&= x_2x_7q^{-1} w_ 1 \otimes w_ 4 ,\\
c(w_ 4 \otimes w_ 3 )&= q^4(x_2x_3x_8)^{-1} w_ 2 \otimes w_ 4 ,&
c(w_ 4 \otimes w_ 4 )&= q w_ 4 \otimes w_ 4.
\end{align*}
If $q=-1$, $x_3x_8=1$, then $\dim \mathfrak(W_1)=72$ and  
\begin{align*}
w_1^2=w_2^2=w_3^2=w_4^2=0,\quad
w_1w_4+(x_2x_3)^{-1}w_2w_1+(x_2x_7)^{-1}w_4w_2=0,\\
w_1w_3-x_3 w_4w_1+x_3x_7^{-1} w_3w_4=0,\quad
w_1w_2-x_2 w_3w_1-x_7^{-1}w_2w_3=0,\\
w_2w_4-x_8 w_3w_2-x_2 w_4w_3=0,\quad
(w_3w_2w_1)^2+(w_2w_1w_3)^2+(w_1w_3w_2)^2=0.
\end{align*}
 \end{example}

\begin{example}
  Let $x_9^2=q^2=x_5^2$, $(x_4x_7)^2=q^4$, $qx_2x_3x_4x_5x_7x_9\in\k^\times$, 
  $W_2=\bigoplus_{i=1}^4\k w_i$ is a braided vector space with the braiding given by 
  \begin{align*}
  c(w_ 1 \otimes w_ 1 )&= q w_ 1 \otimes w_ 1 ,&
c(w_ 1 \otimes w_ 2 )&= x_2 w_ 3 \otimes w_ 4 ,\\
c(w_ 1 \otimes w_ 3 )&= x_3 w_ 4 \otimes w_ 2 ,&
c(w_ 1 \otimes w_ 4 )&= x_4 w_ 2 \otimes w_ 3 ,\\
c(w_ 2 \otimes w_ 1 )&= x_5 w_ 1 \otimes w_ 2 ,&
c(w_ 2 \otimes w_ 2 )&= x_2x_4x_7(x_3x_5)^{-1} w_ 3 \otimes w_ 3 ,\\
c(w_ 2 \otimes w_ 3 )&= x_7 w_ 4 \otimes w_ 1 ,&
c(w_ 2 \otimes w_ 4 )&= q w_ 2 \otimes w_ 4 ,\\
c(w_ 3 \otimes w_ 1 )&= x_9 w_ 1 \otimes w_ 3 ,&
c(w_ 3 \otimes w_ 2 )&= q w_ 3 \otimes w_ 2 ,\\
c(w_ 3 \otimes w_ 3 )&= x_3x_7q^{-1} w_ 4 \otimes w_ 4 ,&
c(w_ 3 \otimes w_ 4 )&= qx_4x_7(x_2x_9)^{-1} w_ 2 \otimes w_ 1 ,\\
c(w_ 4 \otimes w_ 1 )&= x_5x_9q^{-1} w_ 1 \otimes w_ 4 ,&
c(w_ 4 \otimes w_ 2 )&= qx_4x_7(x_3x_5)^{-1} w_ 3 \otimes w_ 1 ,\\
c(w_ 4 \otimes w_ 3 )&= q w_ 4 \otimes w_ 3 ,&
c(w_ 4 \otimes w_ 4 )&= x_4^2x_7(x_2x_9)^{-1} w_ 2 \otimes w_ 2.
  \end{align*}
  If $q=-1$, $x_4x_5x_7x_9=1$, 
   then $\dim \mathfrak{B}(W_2)=72$ and 
  \begin{align*}
  w_1^2=w_2w_4=w_3w_2=w_4w_3=0,\quad
  w_1w_2-x_2 w_3w_4-x_5^{-1} w_2w_1=0,\\
  w_1w_3-x_3 w_4w_2-x_9^{-1} w_3w_1=0,\quad
  w_1w_4-x_4 w_2w_3+x_4x_7 w_4w_1=0,\\
  w_3^2+x_3x_7 w_4^2-x_3(x_2x_9)^{-1} w_2^2=0,\\
  x_4 w_3w_1w_2w_1w_2w_3+\frac{x_3x_4}{x_2} \left[(w_2w_2w_1)^2+ (w_1w_2w_2)^2\right]+w_1w_3w_1w_2w_1w_4=0.
  \end{align*}
\end{example}

\begin{example}
 Let $(x_{3} x_{4} x_{7})^2=q^6$, $qx_{3} x_{4} x_6x_{7}\in\k^\times$. 
 $W_3=\bigoplus_{i=1}^4\k w_i$ is a braided vector space,  with the braiding given by
 \begin{align*}
c(w_ 1 \otimes w_ 1 )&= q w_ 1 \otimes w_ 1 ,&
c(w_ 1 \otimes w_ 2 )&= q^3(x_3x_4)^{-1} w_ 4 \otimes w_ 1 ,\\
c(w_ 1 \otimes w_ 3 )&= x_3 w_ 2 \otimes w_ 1 ,&
c(w_ 1 \otimes w_ 4 )&= x_4 w_ 3 \otimes w_ 1 ,\\
c(w_ 2 \otimes w_ 1 )&= x_7^3(qx_6)^{-1} w_ 4 \otimes w_ 4 ,&
c(w_ 2 \otimes w_ 2 )&= x_6 w_ 1 \otimes w_ 4 ,\\
c(w_ 2 \otimes w_ 3 )&= x_7 w_ 3 \otimes w_ 4 ,&
c(w_ 2 \otimes w_ 4 )&= q w_ 2 \otimes w_ 4 ,\\
c(w_ 3 \otimes w_ 1 )&= x_3x_7x_6^{-1} w_ 2 \otimes w_ 2 ,&
c(w_ 3 \otimes w_ 2 )&= q w_ 3 \otimes w_ 2 ,\\
c(w_ 3 \otimes w_ 3 )&= q^2x_3x_6(x_4x_7^2)^{-1} w_ 1 \otimes w_ 2 ,&
c(w_ 3 \otimes w_ 4 )&= x_3^2x_4x_7q^{-3} w_ 4 \otimes w_ 2 ,\\
c(w_ 4 \otimes w_ 1 )&= qx_4x_7(x_3x_6)^{-1} w_ 3 \otimes w_ 3 ,&
c(w_ 4 \otimes w_ 2 )&= x_3x_4^2x_7q^{-3} w_ 2 \otimes w_ 3 ,\\
c(w_ 4 \otimes w_ 3 )&= q w_ 4 \otimes w_ 3 ,&
c(w_ 4 \otimes w_ 4 )&= qx_4x_6x_7^{-2} w_ 1 \otimes w_ 3.
\end{align*}
If $q=-1=x_3x_4x_7$, then $\dim \mathfrak{B}(W_3)=72$ and 
\begin{align*}
w_1^2=w_2w_4=w_3w_2=w_4w_3=0,\quad
w_4w_1-x_7^{-1}w_1w_2-x_3^{-2}x_6^{-1}w_3^2=0,\\
w_1w_3-x_3w_2w_1-x_3x_7^3x_6^{-1}w_4^2=0,\quad
w_1w_4-x_4 w_3w_1-x_6^{-1} w_2^2=0,\\
w_2w_3-x_7 w_3w_4+x_3x_7 w_4w_2=0,\\
x_3^6 w_2^6+ w_3^6-x_3^6x_4x_6 w_1w_2^4w_3+x_3^9x_4^3x_6^3 (w_2w_1)^3=0.
\end{align*}
\end{example}

\begin{example}
 Let $qx_3x_4x_5x_6\in\k^\times$, $q^{8} x_{5}^{4}= x_{3}^{4} x_{4}^2 x_{6}^{6}$. 
 $W_4=\bigoplus_{i=1}^4\k w_i$ is a braided vector space,  with the braiding given by
  \begin{align*}
c(w_ 1 \otimes w_ 1 )&= q w_ 1 \otimes w_ 1 ,&
c(w_ 1 \otimes w_ 2 )&= x_3^2x_4x_5x_6^2q^{-5} w_ 4 \otimes w_ 4 ,\\
c(w_ 1 \otimes w_ 3 )&= x_3 w_ 2 \otimes w_ 2 ,&
c(w_ 1 \otimes w_ 4 )&= x_4 w_ 3 \otimes w_ 3 ,\\
c(w_ 2 \otimes w_ 1 )&= x_5 w_ 1 \otimes w_ 4 ,&
c(w_ 2 \otimes w_ 2 )&= x_6 w_ 4 \otimes w_ 1 ,\\
c(w_ 2 \otimes w_ 3 )&= q w_ 2 \otimes w_ 3 ,&
c(w_ 2 \otimes w_ 4 )&= q^3(x_3x_6)^{-1} w_ 3 \otimes w_ 2 ,\\
c(w_ 3 \otimes w_ 1 )&= q^4x_5(x_3x_4x_6^2)^{-1} w_ 1 \otimes w_ 2 ,&
c(w_ 3 \otimes w_ 2 )&= x_3^2x_4x_6^3(q^4x_5)^{-1} w_ 4 \otimes w_ 3 ,\\
c(w_ 3 \otimes w_ 3 )&= q^7x_5(x_3^2x_4^2x_6^3)^{-1} w_ 2 \otimes w_ 1 ,&
c(w_ 3 \otimes w_ 4 )&= q w_ 3 \otimes w_ 4 ,\\
c(w_ 4 \otimes w_ 1 )&= q^7x_5^2(x_3^3x_4x_6^4)^{-1} w_ 1 \otimes w_ 3 ,&
c(w_ 4 \otimes w_ 2 )&= q w_ 4 \otimes w_ 2 ,\\
c(w_ 4 \otimes w_ 3 )&= x_3x_6x_5^{-1} w_ 2 \otimes w_ 4 ,&
c(w_ 4 \otimes w_ 4 )&= q^8(x_3^3x_4x_6^3)^{-1} w_ 3 \otimes w_ 1.
 \end{align*}
 If $q=-1$, $x_3^2x_4x_6^3 = x_5^2$, then $\dim\mathfrak{B}(W_4)=72$ and
 \begin{align*}
 w_1^2=w_2w_3=w_3w_4=w_4w_2=0,\quad
 w_1w_2+x_5^3x_6^{-1} w_4^2-x_3x_4x_6^2x_5^{-1} w_3w_1=0,\\
 w_1w_3-x_3 w_2^2+x_3x_6 w_4w_1=0,\quad
 w_2w_1-x_5 w_1w_4+x_4x_5 w_3^2=0,\\
 w_3w_2+x_3x_6 w_2w_4-x_5 w_4w_3=0,\\
 w_2^6+x_3^{-3}(w_3w_1)^3-x_6^3x_5^{-3}(w_1w_2)^3+x_6 w_1w_2^4w_4-x_6^3x_5^{-3}(w_2w_1)^3=0.
 \end{align*}
 \end{example}
 
 \begin{example}
Let $x_4^2=q^2=x_3^2$, $(x_5x_6)^2=q^4$, $qx_2x_3x_4x_5x_6x_8\in\k^\times$.  
$W_5=\bigoplus_{i=1}^4\k w_i$ is a braided vector space,  with the braiding given by
\begin{align*}
c(w_ 1 \otimes w_ 1 )&= q w_ 1 \otimes w_ 1 ,&
c(w_ 1 \otimes w_ 2 )&= x_3x_4q^{-1} w_ 2 \otimes w_ 1 ,\\
c(w_ 1 \otimes w_ 3 )&= x_3 w_ 3 \otimes w_ 1 ,&
c(w_ 1 \otimes w_ 4 )&= x_4 w_ 4 \otimes w_ 1 ,\\
c(w_ 2 \otimes w_ 1 )&= x_5 w_ 4 \otimes w_ 3 ,&
c(w_ 2 \otimes w_ 2 )&= x_5x_8q^{-1} w_ 3 \otimes w_ 3 ,\\
c(w_ 2 \otimes w_ 3 )&= q w_ 2 \otimes w_ 3 ,&
c(w_ 2 \otimes w_ 4 )&= x_8 w_ 1 \otimes w_ 3 ,\\
c(w_ 3 \otimes w_ 1 )&= qx_5x_6(x_4x_8)^{-1} w_ 2 \otimes w_ 4 ,&
c(w_ 3 \otimes w_ 2 )&= q^2x_3(x_2x_5)^{-1} w_ 1 \otimes w_ 4 ,\\
c(w_ 3 \otimes w_ 3 )&= q^2x_3x_6(x_2x_4x_8)^{-1} w_ 4 \otimes w_ 4 ,&
c(w_ 3 \otimes w_ 4 )&= q w_ 3 \otimes w_ 4 ,\\
c(w_ 4 \otimes w_ 1 )&= qx_2x_6^{-1} w_ 3 \otimes w_ 2 ,&
c(w_ 4 \otimes w_ 2 )&= q w_ 4 \otimes w_ 2 ,\\
c(w_ 4 \otimes w_ 3 )&= x_6 w_ 1 \otimes w_ 2 ,&
c(w_ 4 \otimes w_ 4 )&= x_2 w_ 2 \otimes w_ 2.
\end{align*}
If $q=-1$, $x_3x_4x_5x_6=1$, then $\dim\mathfrak{B}(W_5)=72$ and 
\begin{align*}
w_1^2=w_2w_3=w_3w_4=w_4w_2=0,\quad
w_2w_1+x_5x_6 w_1w_2-x_5 w_4w_3=0,\\
w_2w_4-x_8 w_1w_3 +x_3x_8 w_3w_1=0,\quad
w_1w_4-x_4 w_4w_1-x_2x_5x_3^{-1} w_3w_2=0,\\
w_4^2-x_2 w_2^2-x_2x_5x_8 w_3^2=0,\\
(w_1w_2w_2)^2-x_5x_8x_3^{-1}w_1w_3w_2^2w_1w_3+(w_2w_2w_1)^2+x_5 w_3w_2^2w_1w_2w_4=0.
\end{align*}
\end{example}

\begin{example}\cite{Grana2000}
Let $qx_1x_2x_3x_5\in\k^\times$, $(x_3x_5)^2=q^4$. 
$W_6=\bigoplus_{i=1}^4\k w_i$ is a braided vector space,  with the braiding given by
\begin{align*}
c(w_ 1 \otimes w_ 1 )&= q w_ 1 \otimes w_ 1 ,&
c(w_ 1 \otimes w_ 2 )&= q^3(x_2x_3)^{-1} w_ 2 \otimes w_ 4 ,\\
c(w_ 1 \otimes w_ 3 )&= x_1 w_ 3 \otimes w_ 2 ,&
c(w_ 1 \otimes w_ 4 )&= x_5 w_ 4 \otimes w_ 3 ,\\
c(w_ 2 \otimes w_ 1 )&= qx_5x_1^{-1} w_ 1 \otimes w_ 3 ,&
c(w_ 2 \otimes w_ 2 )&= q w_ 2 \otimes w_ 2 ,\\
c(w_ 2 \otimes w_ 3 )&= q^2x_5(x_1x_2)^{-1} w_ 3 \otimes w_ 4 ,&
c(w_ 2 \otimes w_ 4 )&= x_3x_5x_1^{-1} w_ 4 \otimes w_ 1 ,\\
c(w_ 3 \otimes w_ 1 )&= x_3x_5x_2^{-1} w_ 1 \otimes w_ 4 ,&
c(w_ 3 \otimes w_ 2 )&= x_3 w_ 2 \otimes w_ 1 ,\\
c(w_ 3 \otimes w_ 3 )&= q w_ 3 \otimes w_ 3 ,&
c(w_ 3 \otimes w_ 4 )&= x_1x_3q^{-1} w_ 4 \otimes w_ 2 ,\\
c(w_ 4 \otimes w_ 1 )&= x_1x_2x_3q^{-2} w_ 1 \otimes w_ 2 ,&
c(w_ 4 \otimes w_ 2 )&= x_2 w_ 2 \otimes w_ 3 ,\\
c(w_ 4 \otimes w_ 3 )&= qx_2x_5^{-1} w_ 3 \otimes w_ 1 ,&
c(w_ 4 \otimes w_ 4 )&= q w_ 4 \otimes w_ 4.
\end{align*}
If $q=-1$, $x_3x_5=1$, then $\dim\mathfrak{B}(W_6)=72$ and 
\begin{align*}
w_1^2=w_2^2=w_3^2=w_4^2=0,\quad
w_4w_1-x_1x_2x_3 w_1w_2-x_1 w_2w_4=0,\\
w_1w_3-x_1 w_3w_2+x_1x_3 w_2w_1=0,\quad
w_3w_4-x_1x_2x_3 w_2w_3+x_1x_3 w_4w_2=0,
\\
w_1w_4-x_5 w_4w_3-x_2 w_3w_1=0,\quad
(w_2w_1w_4)^2+(w_1w_4w_2)^2+(w_4w_2w_1)^2=0.
\end{align*}
\end{example}

\begin{example}
Let $(x_{1} x_{3} x_{7}^{3})^2=q^{10}$, $qx_1x_2x_3x_7\in\k^\times$. 
$W_7=\bigoplus_{i=1}^4\k w_i$ is a braided vector space,  with the braiding given by
\begin{align*}
c(w_ 1 \otimes w_ 1 )&= x_1 w_ 3 \otimes w_ 2 ,&
c(w_ 1 \otimes w_ 2 )&= q w_ 1 \otimes w_ 2 ,\\
c(w_ 1 \otimes w_ 3 )&= x_3 w_ 2 \otimes w_ 2 ,&
c(w_ 1 \otimes w_ 4 )&= x_1x_3x_7^2q^{-3} w_ 4 \otimes w_ 2 ,\\
c(w_ 2 \otimes w_ 1 )&= q w_ 2 \otimes w_ 1 ,&
c(w_ 2 \otimes w_ 2 )&= x_1x_7^2(x_2x_3)^{-1} w_ 4 \otimes w_ 1 ,\\
c(w_ 2 \otimes w_ 3 )&= x_7 w_ 3 \otimes w_ 1 ,&
c(w_ 2 \otimes w_ 4 )&= x_2x_3x_7(qx_1)^{-1} w_ 1 \otimes w_ 1 ,\\
c(w_ 3 \otimes w_ 1 )&= x_1^3x_7^5(q^5x_2^2)^{-1} w_ 4 \otimes w_ 4 ,&
c(w_ 3 \otimes w_ 2 )&= x_1x_7^2(qx_2)^{-1} w_ 2 \otimes w_ 4 ,\\
c(w_ 3 \otimes w_ 3 )&= qx_7x_2^{-1} w_ 1 \otimes w_ 4 ,&
c(w_ 3 \otimes w_ 4 )&= q w_ 3 \otimes w_ 4 ,\\
c(w_ 4 \otimes w_ 1 )&= x_2x_3x_7q^{-2} w_ 1 \otimes w_ 3 ,&
c(w_ 4 \otimes w_ 2 )&= x_2 w_ 3 \otimes w_ 3 ,\\
c(w_ 4 \otimes w_ 3 )&= q w_ 4 \otimes w_ 3 ,&
c(w_ 4 \otimes w_ 4 )&= q^3x_2^2x_3(x_1^2x_7^3)^{-1} w_ 2 \otimes w_ 3.
\end{align*}
If $q=-1=x_1x_3x_7^3$, then $\dim \mathfrak{B}(W_7)=72$ and 
\begin{align*}
w_4w_2-x_7 w_1w_4-x_2 w_3^2=0,\quad
w_1^2-x_1 w_3w_2+x_1(x_2x_3x_7)^{-1} w_2w_4=0,\\
w_1w_2=w_2w_1=w_3w_4=w_4w_3=0,\quad
w_1w_3-x_3 w_2^2-(x_2x_3x_7)^{-1} w_4w_1=0,\\
w_2w_3-x_7 w_3w_1+x_2^{-2}x_3^{-3}x_7^{-3}w_4^2=0,\\
(w_1^2w_3)^2+(w_1w_3w_1)^2 +x_1x_2^{-1}w_2^2w_3w_1^2w_4+x_1 w_2w_3w_1^2w_3^2+(w_3w_1^2)^2=0.
\end{align*}
\end{example}

\begin{example}
Let $qx_1x_3x_4x_7\in \k^\times$ and $q^4=(x_4x_7)^2$.
$W_8=\bigoplus_{i=1}^4\k w_i$ is a braided vector space,  with the braiding given by
 \begin{align*}
 c(w_ 1 \otimes w_ 1 )&= x_1 w_ 2 \otimes w_ 3 ,&
c(w_ 1 \otimes w_ 2 )&= q w_ 1 \otimes w_ 2 ,\\
c(w_ 1 \otimes w_ 3 )&= x_3 w_ 4 \otimes w_ 4 ,&
c(w_ 1 \otimes w_ 4 )&= x_4 w_ 3 \otimes w_ 1 ,\\
c(w_ 2 \otimes w_ 1 )&= q w_ 2 \otimes w_ 1 ,&
c(w_ 2 \otimes w_ 2 )&=x_3^3x_4^3(qx_1^2x_7^2)^{-1} w_ 1 \otimes w_ 4 ,\\
c(w_ 2 \otimes w_ 3 )&= x_7 w_ 4 \otimes w_ 2 ,&
c(w_ 2 \otimes w_ 4 )&=x_3^2x_4^4(q^3x_1x_7)^{-1} w_ 3 \otimes w_ 3 ,\\
c(w_ 3 \otimes w_ 1 )&=q^2x_1^2x_7^3(x_3^3x_4^3)^{-1} w_ 2 \otimes w_ 2 ,&
c(w_ 3 \otimes w_ 2 )&=x_3x_4^2(qx_1)^{-1} w_ 1 \otimes w_ 3 ,\\
c(w_ 3 \otimes w_ 3 )&=q^3x_7(x_3x_4^2)^{-1} w_ 4 \otimes w_ 1 ,&
c(w_ 3 \otimes w_ 4 )&= q w_ 3 \otimes w_ 4 ,\\
c(w_ 4 \otimes w_ 1 )&=qx_1x_7(x_3x_4)^{-1} w_ 2 \otimes w_ 4 ,&
c(w_ 4 \otimes w_ 2 )&=qx_4x_1^{-1} w_ 1 \otimes w_ 1 ,\\
c(w_ 4 \otimes w_ 3 )&= q w_ 4 \otimes w_ 3 ,&
c(w_ 4 \otimes w_ 4 )&=q^2x_1x_7(x_3^2x_4)^{-1} w_ 3 \otimes w_ 2.
 \end{align*}
 If $q=-1$, $x_4x_7=1$, then $\dim \mathfrak{B}(W_8)=72$ and 
 \begin{align*}
 w_1w_2=w_2w_1=w_3w_4=w_4w_3=0,\quad
 w_1^2-x_1w_2w_3+x_1x_7 w_4w_2=0,\\
 w_1w_3-x_3w_4^2+x_1x_7^2x_3^{-1}w_3w_2=0,\quad
 w_1w_4-x_7^{-1}w_3w_1+x_1^2x_7^5x_3^{-3}w_2^2=0,\\
 w_2w_4+x_3^2(x_1x_7^5)^{-1}w_3^2+x_3(x_1x_7^2)^{-1}w_4w_1=0,\\
 w_1w_3w_2w_2w_4w_1+w_2w_2w_4w_1w_1w_3+w_4w_1w_1w_3w_2w_2=0.
 \end{align*}
 \end{example}

\section*{Acknowledgements}
The author thanks the referees for careful reading and helpful comments on the writing of this paper. 
Their suggestions generalize the results of $\mathfrak{B}(W_{X,r})$ 
from  special cases of  permutation solutions to 
arbitrary permutation solution $(X,r)$. This inspires the author to generalize the results
to all non-degenerate involutive solutions.
This work was partially supported by
Foundation of Jiangxi Educational Committee (GJJ191681).

\bibliographystyle{CommAlg2022}
\bibliography{suzuki}
\end{document}